\newtheorem{thm}{Theorem}[section]
\newtheorem{prop}[thm]{Proposition}
\newtheorem{lem}[thm]{Lemma}
\newtheorem{defn}[thm]{Definition}
\theoremstyle{definition}\newtheorem{question}[thm]{Question}
\theoremstyle{remark}
\theoremstyle{definition}\newtheorem{example}[thm]{Example}
\def\R{{\mathbb R}}
\def\N{{\mathbb N}}
\def \air{{\vskip 12pt\noindent}\par}
\def \ib{\begin{enumerate}}
\def \ie{\end{enumerate}}
\def \det{{\;\mbox{\rm det}}}
\def \Id {{\rm Id}}
\def\ib {\begin{enumerate}}
\def\ie {\end{enumerate}}
\def\adots{\mathinner{\mkern2mu\raise 1pt\hbox{.}\mkern 3mu\raise
4pt\hbox{.}\mkern1mu\raise 7pt\hbox{{.}}}}
\def \diag {{\rm diag}}
\title{Piecewise Certificates of Positivity for matrix polynomials}
\author{Ronan Quarez}
\address{IRMAR (CNRS, URA 305), Universit\'e de Rennes 1, Campus de Beaulieu\\ 35042 Rennes Cedex, France} 
\email{e-mail : ronan.quarez@univ-rennes1.fr}
\date{\today} 
\keywords{Biforms ; Matrix polynomials ; Positive semi-definite ; Positivity certificate ; Sum of Squares} 
\subjclass[2000]{14P ; 15A}
\begin{document}

\begin{abstract}
We show that any symmetric positive definite homogeneous matrix polynomial $M\in\R[x_1,\ldots,x_n]^{m\times m}$ admits a piecewise semi-certificate, i.e. a collection of identites $M(x)=\sum_jf_{i,j}(x)U_{i,j}(x)^TU_{i,j}(x)$ where $U_{i,j}(x)$ is a matrix polynomial and $f_{i,j}(x)$ is a non negative polynomial on a semi-algebraic subset $S_i$, where $\R^n=\cup_{i=1}^r S_i$. This result generalizes to the setting of biforms.\par
Some examples of certificates are given and among others, we study a variation around the Choi counterexample of a positive semi-definite biquadratic form which is not a sum of squares.
As a byproduct we give a representation of the famous non negative sum of squares polynomial $x^4z^2+z^4y^2+y^4x^2-3\,x^2y^2z^2$ as the determinant of a positive semi-definite quadratic matrix polynomial.      
\end{abstract}

\maketitle

\section{Introduction}
The Hilbert 17-th problem asks if a non negative polynomial $f(x)\in\R[x_1,\ldots,x_n]$ is a sum of squares. The answer given by Artin provides an identity with denominators (or rational functions), namely there are two sums of squares of polynomials $q(x)$ and $p(x)$ such that $q(x)f(x)=p(x)$.\par
Such an identity is a called Positivestellensatz and can be seen as a {\it certificate} which algebraically proves the positivity of the polynomial $f(x)$. For a  general polynomial, the denominator $q(x)$ is necessary. 
\par One may also consider a "relative" version, namely when $f(x)$ is non negative on a basic semi-algebraic subset $S=\{x\in\R^n\mid f_1(x)\geq 0,\ldots,f_r(x)\geq 0\}$. This search of certificates received a lot of contributions especially for those semi-algebraic subsets $S$ where the Positivestellensatz is denominator free. Essentially it happens to be possible when $f(x)$ is positive and when the description of $S$ satisfies some archimedean property (which can be seen as a ``strong" compactedness assumption). Among all the available references, we mention the seminal works of Schm\"udgen \cite{Sg} and Putinar \cite{Pu}.\par
Another natural and often difficult question is, when it is possible to obtain a certificate of positivity, to study its complexity, i.e. the number of squares that are needed.
\air

In this paper, we are mainly interested in matrix polynomials, namely polynomials whose coefficients are matrices. Let $x=(x_1,\ldots,x_n)$ and $\R[x]^{m\times m}$ be the set of all square matrix polynomials of size $m$ with entries in $\R[x]$.
In that context, by a square we mean a expression of the form $U(x)^TU(x)$ where $U(x)\in \R[x]^{m\times m}$, sometimes we will also use the terminology of {\it hermitian} square.\par
Given a symmetric matrix polynomial $M(x)$, the natural question is the following :\par Assume that $M(x)$ is positive semi-definite ({\it psd} in short) for all $x\in\R^n$. Is $M(x)$ a sum of hermitian squares ? Or in otherwords, do we have a positivity certificate for $M(x)$ ?\par
It is well known that, when $n=1$, then $M(x)$ is a sum of two squares (see for instance \cite{Dj} for a proof), i.e. $M(x)=U(x)^TU(x)+V(x)^TV(x)$ where $U(x),V(x)\in \R[x]^{m\times m}$.\par
There is also another case where we have a positive answer : the case of biquadratic forms in dimension $2$. Namely, if $m=2$ and if the entries are homogeneous polynomials of degree $2$, then $M(x)$ is also a sum of squares.\par
In fact this last case can be seen as a particular case of the homogenized version of the previous one (see \cite{CLR1} for this fact and a proof). \par
Unfortunaltely these are essentially the only cases where such a certificate exists for a matrix polynomial. For instance, it becames false even for biquadratic forms in $3$ variables and dimension $3$ (confer the Choi counterexample in \cite{C}).\par
\par We may also look at the relative situation : if $M(x)$ is positive definite for all $x$ in a semi-algebaric subset $S\subset\R^n$ whose description satisfies an archimedean property, then we get  a similar certificate as in the polynomial case. This has been proved by Hol and Scherer in \cite{HS} ; see also \cite{KS} for a different and more algebraic proof.\air

In this paper, we are searching for certificates of positivity {\it without denominators} for matrix polynomials. We first study certificates which are sums of squares of matrix polynomial weighted by a psd (scalar) polynomial.  More precisely, given a psd matrix polynomial $M(x)$, we wonder if $M(x)$ can be written as a finite sum
\begin{equation}\label{without-denom}
M(x)=\sum_if_i(x)U_i(x)^TU_i(x)
\end{equation}
 where $U_i(x)\in \R[x]^{m\times m}$ and $f_i(x)$ is non negative polynomial on $\R^n$.\par We call it a {\it semi-certificate} since, roughly speaking, we have a certificate which is a hermitian sum of squares weighted by some coefficients which are non negative polynomials. 
\par Of course such a certificate may gives some others which will be totally algebraic by using usual certificates for the psd polynomials $f_i(x)$. Beware that using denominators at this point could break any interest since we may readily get one such certificate with denominators simply by using the Gauss algorithm (indeed, it suffices to inverte the $m-1$ first principal minors of the given matrix polynomial). Although, whene the matrix polynomial is positive definite, a relative version of semi-certificate with respect to a semi-algebraic subset $S$ whose desciption is archimedean would produce a usual denominator free algebraic certificate for the matrix polynomial on $S$, using for instance Schm\"udgen or Putinar certificates for all the $f_{i,j}$'s.\par
Unfortunately such a semi-certificate as in (\ref{without-denom}) does not exist in general, even for a positive definite matrix polynomial, as it is illustrated in section 3. \par 
Nevertheless, noticing that one may obtain such certificate locally, we introduce the notion of {\it piecewise} semi-certificate, i.e. a collection of semi-certificates as in (\ref{without-denom}) with respect to a semi-algebraic covering of $\R^n$.\par We show in section 4, that such a certificate exists for any {\it positive definite} homogeneous matrix polynomial. We show also that we may translate the notion of semi-certificate and our related results to the case of biforms.\par
Some examples and counterexamples are given and for instance, in section 5, we study some variations around the Choi counterexample given in \cite{C}.  \par
As a byproduct we give a representation of the famous psd non sum of squares polynomial $x^4z^2+z^4y^2+y^4x^2-3x^2y^2z^2$ as the determinant of a psd quadratic matrix polynomial.   
\air
Of course, a lot a work have to be done to better understand the framework of semi-certificate for psd non definite matrix polynomials. Moreover, concerning complexity, one may be interested in estimating the lenght of the hermitian sum of squares representation, and also in the number of pieces of the semi-certificate. Since we use a compactedness argument, no bound on the number of pieces of the semi-algebraic covering can be derived by our method.

\section{Notations}
\subsection{Matrix polynomials and biforms}
In all the paper, we consider a set of variables $x=(x_1,\ldots,x_n)$ and the polynomial ring $\R[x]$ whose associated field of rational functions is $\R(x)$.\par

A matrix polynomial $A(x)$ is just a matrix whose entries are polynomials. For instance, for a square matrix polynomial of size $m$ in the variables $x$, we write $A(x)\in\R[x]^{m\times m}$.\par 

A {\it form} of type $(n,d)$ will denote an homogeneous polynomial of degree $d$ in $n$ variables. A {\it biform}  of type $(n_1,d_1;n_2,d_2)$ will denote a homogeneous polynomial in $n_1+n_2$ variables which is of degree $d_1$ with respect to a set of $n_1$ variables and of degree $d_2$ with respect to the other set of $d_2$ variables. \par
To a matrix polynomial $A(x)=(a_{i,j}(x))\in\R[x]^{m\times m}$ whose entries are all forms of type $(n,d)$, we may canonically associate a biform of type $(n_1,d_1;m,2)$ : $$f_A(x,y)=\sum_{1\leq i,j\leq m}a_{i,j}(x)y_iy_j.$$

We say that a biform of type $(n_1,d_1;n_2,d_2)$ is 
{\it positive definite} if $f(x,y)>0$ for all $(x,y)\in S^{n_1-1}\times S^{n_2-1}$, where $S^{k-1}$ denotes the $k_1$ dimentional unit sphere in $\R^k$. Equivalentely, we may alternatively consider the product of projective spaces instead of unit spheres. \par
If $A(x)$ is a symmetric matrix polynomial in $\R[x]^{m\times m}$ whose entries are all forms of type $(n,d)$, we say that $A(x)$ is positive definite if the associated biform $f_A(x,y)$ is positive definite. We may also note that $A(x)$ is psd if and only if $f_A(x,y)$ is psd on the product of unit spheres.

\subsection{Sum of squares and Hilbert 17th problem}
Note that sums of hermitian squares when dealing with matrix polynomials corresponds to usual sum of squares in the terminology of biforms. \air

Let $A(x)$ be a symmetric matrix polynomial in $\R[x]^{m\times m}$ which is positive semi-definite for all substitution of $x\in\R^n$. Then, the Gauss reduction algorithm gives a solution with denominators to the Hilbert $17$-th Problem for a generic matrix polynomial : there exist $r\in \N$, $U_1(x),\ldots,U_r(x)\in\R[x]^{m\times m}$ and $f_1(x)\ldots, f_r(x),f(x)\in\R[x]$ psd polynomials such that 
\begin{equation}\label{sos_matrices}f(x)A(x)=\sum_{i=1}^rf_i(x)U_i(x)^TU_i(x)\end{equation}
where $f(x)$ can be choosen as the product of the $m-1$ first minors of $A(x)$ when they do not identically vanish. Moreover, by multiplying the identity (\ref{sos_matrices}) by the denomiators appearing in a positivity certificate for the $f_i$'s, we may assume that the polynomials $f_i$ and $f$ are sum of squares.\air

In our paper, we are mainly interested in positivity certificates without denominators, namely when we may choose $f\equiv 1$.  Hence, we first look for matrix polynomials which can be written as in (\ref{without-denom}) :
$$A(x)=\sum_{i=1}^rf_i(x)U_i(x)^TU_i(x)$$
where the $f_i$'s are non-negative polynomials (we cannot ask for them to be sum of squares since denominators are already needed for polynomials).

\air
One may want to measure the complexity of the certificate given in (\ref{without-denom}), simply by counting the  number of squares : $r$. But, one may also prefer another way of counting.\par

Noticing that 
$U^TU=U^T(E_1+\ldots+E_m)
U=(E_1U)^T(E_1U)+\ldots+(E_mU)^T(E_mU)$
where $E_i$ is the diagonal matrix whose only non null entry is $1$ at the $i$-th position onto the diagonial, we may prefer to count ``rank-one" sums of squares, i.e. when $U(x)$ has rank one in $\R(x)^{m\times m}$. The advantage of that count is that it coincides with the number of needed squares when we view our matrix polynomial as a biform.

\section{Semi-certificates}
The following examples shows that it is not possible in general to hope for (\ref{without-denom}).

\subsection{Some examples}
\begin{example}\label{first-example}
Let : $$M=\left(\begin{array}{cc}
1+x^2&xy\\
xy&x^2+y^4
\end{array}\right)$$
Of course $M$ is psd for all $(x,y)\in\R^2$.\air

To show that $M$ cannot be written as in (\ref{without-denom}), we may proceed by applying to the biform $f_M$ which is canonically associated to $M$) the technology of cages and Gram matrices developped in \cite{CLR2}. But we prefer to produce here an elementary and self-contained argument.\par

Assume that $M=\sum_{i=1}^rf_iU_i^TU_i$, where $U_i\in(\R[x,y])^{2\times 2}$ and $f_i\geq 0$ on all $\R^2$. 

By decomposing into rank one sum of squares, we may assume that  $$U_i^TU_i=\left(\begin{array}{cc}
a_i^2&a_ib_i\\
a_ib_i&b_i^2\end{array}\right), \quad {\rm where}\; a_i,b_i\in\R[x,y].$$
  Identifying the $(1,1)$-entry, we get $\sum_{i=1}^rf_ia_i^2=1+x^2$. Thus, $f_i$ does not depend on $y$ : $f_i\in\R[x]$. Since $f_i\geq 0$ for all $x\in\R$, we deduce that $f_i$ is a sum of two squares in $\R[x]$. Thus, if we increase the integer $r$ and if we change the $U_i$'s, we may assume that $f_i=1$. Identifying the entries, we get 
$$\left\{\begin{array}{rcl}
\sum_{i=1}^ra_i^2&=&1+x^2\\
\sum_{i=1}^rb_i^2&=&x^2+y^4\\
\sum_{i=1}^ra_ib_i&=&xy
\end{array}\right.$$
By a simple degree consideration, the polynomial $b_i$ has necessarily the form $$b_i=\alpha_ix+\beta_iy^2,\; {\rm with}\; \alpha_i,\beta_i\in\R.$$
Likewise, the polynomial $a_i$ has necessarily the form $$a_i=\gamma_i+\delta_i x\; {\rm with}\; \gamma_i,\delta_i\in\R.$$ It follows a contradiction with the identity $\sum_{i=1}^ra_ib_i=xy$.
\end{example}

The previous pathology is not due to the fact that the matrix $M$ is only psd, as shown by considering the following variation :

\begin{example}\label{second-example}
Let 
$$N=\left(\begin{array}{cc}
1+x^2+\epsilon (x^4+y^4)&xy\\
xy&\epsilon (1+x^4)+x^2+y^4
\end{array}\right)$$
where $\epsilon>0$.
This is a positive definite matrix polynomial on all $\R^2$ (and even at infinity, i.e. the associated homogeneized matrix polynomial remains positive definite).\par
Assume that $N$ can be written as in (\ref{without-denom}) :
$$ \begin{array}{ccl}
N&=&\displaystyle\sum_{i=1}^rf_i(x,y)\left(\begin{array}{cc}
A_i^2&A_iB_i\\
A_iB_i&B_i^2\end{array}\right)
\\
&+&
\displaystyle\sum_{j=1}^r{g_j}(x,y)\left(\begin{array}{cc}
U_j^2&U_jV_j\\
U_jV_j&V_j^2\end{array}\right)
\\ &+&
\sum_{k=1}^r\left(\begin{array}{cc}
R_k^2&R_kS_k\\
R_kS_k&S_k^2\end{array}\right)
\end{array}$$ 

where the $f_i$'s are polynomials of degree at most $4$, the $g_j,R_k,S_k$'s are polynomials of degree at most $2$, the $U_j,V_j$'s are polynomials of degree at most $1$ and the $A_i,B_i$'s are constant. Moreover, the polynomials $f_i,g_j$ are assumed to be non negative on $\R^2$. Since their degrees are at most $4$ and the number of variables is $2$, we know that they are sum of squares of polynomials. Hence, we may assume that 
\begin{equation}\label{second-example-non-sos}
N=\sum_{i=1}^r\left(\begin{array}{cc}
R_i^2&R_iS_i\\
R_iS_i&S_i^2\end{array}\right)
\end{equation} 

where 
$$\left\{\begin{array}{l}
R_i=a_i+b_ix+c_ix^2+d_iy+e_iy^2+f_ixy,\\
S_i=\alpha_i+\beta_ix+\gamma_ix^2+\delta_iy+\nu_iy^2+\mu_ixy,\\
(a_i,b_i,c_i,d_i,e_i,f_i,\alpha_i,\beta_i,\gamma_i,\delta_i,\nu_i,\mu_i)\in\R^{12}.
\end{array}\right.$$

Let us consider the following vectors in the usual euclidien space $\R^r$ : 
$$\begin{array}{l}{\bar a}=(a_i),{\bar b}=(b_i),{\bar c}=(c_i),{\bar d}=(d_i),{\bar e}=(e_i),{\bar f}=(f_i),\\
{\bar \alpha}=(\alpha_i),{\bar \beta}=(\beta_i),{\bar \gamma}=(\gamma_i),{\bar \delta}=(\delta_i),{\bar \nu}=(\nu_i),{\bar \mu}=(\mu_i).
\end{array}$$ 

Identifying the non diagonal entries in (\ref{second-example-non-sos}), we get  
\begin{equation}
\label{contradiction-second-example}{\bar b}\cdot{\bar\delta}+\bar\beta\cdot \bar d+\bar a\cdot\bar\mu+\bar f\cdot\bar\alpha=1
\end{equation}

where $\cdot$ denotes the usual inner product on $\R^r$.\par

By identifying the diagonal entries in (\ref{second-example-non-sos}) and by the Cauchy-Schwarz inequality, we get

$$\left\{\begin{array}{rclclcl}
\bar b^2&=&1-2\,\bar a\cdot\bar c&\leq& 1+2\sqrt{\bar a^2}\sqrt{\bar c^2}&\leq& 1+2\sqrt\epsilon\\
\bar\delta^2&=&-2\,\bar\alpha\cdot\bar\nu&\leq& 2 \sqrt{\bar\alpha^2}\sqrt{\bar\nu^2}&\leq& 2\sqrt\epsilon\\
\bar \beta^2&=&1-2\,\bar \alpha\cdot\bar \gamma&\leq& 1+2\epsilon&&\\
\bar d^2&=&-2\,\bar a\cdot\bar c&\leq& 2\sqrt\epsilon&&\\
\bar a^2&=&1&&&&\\
\bar \mu^2&=&-2\,\bar \gamma\cdot\bar\nu&\leq& 2\sqrt\epsilon&&\\
\bar f^2&=&-2\,\bar c\cdot \bar e&\leq& 2\sqrt{\bar c^2}\sqrt{\bar e^2}&\leq& 2\epsilon\\
\bar\alpha^2&=&\epsilon&&&&
\end{array}\right.$$

Again, by the Cauchy-Schwarz inequality, we have :
$$\left\{\begin{array}{rcl}
{\bar b}\cdot{\bar\delta}&\leq& \sqrt{2(1+2\sqrt\epsilon)\sqrt{\epsilon}}\\
{\bar \beta}\cdot{\bar d}&\leq& \sqrt{2(1+2\epsilon)\sqrt{\epsilon}}\\
\bar a\cdot\bar \mu&\leq& \sqrt{2\sqrt{\epsilon}}\\
\bar f\cdot \bar \alpha&\leq& \sqrt{2}\epsilon
\end{array}\right.$$

Thus, if we take $\epsilon$ small enough, then we get a contradiction to (\ref{contradiction-second-example}). Namely, $N$ does not admit a certificate as in (\ref{without-denom}). 
\end{example}

\subsection{Semi-algebraic covering}
Thus, even for positive definite matrix polynomials, a certificate (\ref{without-denom}) does not exist in general. Although, one may remark that such a certificate exists {\it locally} in our examples. \air

Look at example \ref{first-example}. When $|x | \geq |y |$, then consider the identity
$$\left(\begin{array}{cc}
1+x^2&xy\\
xy&x^2+y^4
\end{array}\right)=\left(\begin{array}{cc}
x^2&xy\\
xy&y^2
\end{array}\right)+\left(\begin{array}{cc}
1&0\\
0&0
\end{array}\right)+(x^2-y^2+y^4)\left(\begin{array}{cc}
0&0\\
0&1
\end{array}\right)$$

whereas when $|y | \geq |x |$, then consider
$$\left(\begin{array}{cc}
1+x^2&xy\\
xy&x^2+y^4
\end{array}\right)=\left(\begin{array}{cc}
1&xy\\
xy&x^2y^2
\end{array}\right)+x^2 \left(\begin{array}{cc}
1&0\\
0&1
\end{array}\right)+(y^4-x^2y^2)\left(\begin{array}{cc}
0&0\\
0&1
\end{array}\right)$$

Hence we are going to change our definition by considering a semi-algebraic covering of our space.\par From now on, we will focus on {\it forms}, namely {\it homogeneous} polynomials.

\begin{defn}
Let $f(x,y)$ be a biform of type $(n,d_1;m,d_2)$. A {\it semi-certificate} of positivity is the data of
a semi-algebraic covering $\R^{n}=S_1\cup\ldots\cup S_r$ such that 
$$\forall x\in S_i,\quad f(x,y)=\sum_{j=1}^{r_i}f_{i,j}(x)(g_{j,i}(x,y))^2$$
where each $f_{i,j}(x)$ is a form of degree $e_{i,j}$ which is non negative on $S_i$ and
$g_{i,j}^2(x,y)$ is a biform of type $(n,d_1-e_{i,j};m,d_2)$.
\end{defn}

Roughly speaking, the semi-certificate is a piecewise identity which is a sum of squares with respect to one set of variables $y$ and only psd with respect to the other set of variables $x$.

\air
For convenience, we may formulate what happens when we are dealing with matrix polynomials. The symmetric psd matrix polynomial $M(x)\in \R[x]^{m\times m}$ admits a semi-certificate if there exists a (finite) semi-algebraic covering $(S_i)_{1\leq i\leq r}$ of $\R^n$ and 
$$(\forall i)(\exists r_i\in\N)(\forall j\in\{1,\ldots,s_i\})(\exists f_{i,j}(x)\in\R[x])(\exists U_{i,j}(x)\in \R[x]^{m\times m})$$
such that
\begin{equation}\label{semi-certif}\left(M(x)=\sum_{j=1}^{r_i}f_{i,j}(x)U_{i,j}^T(x)U_{i,j}(x)\right) \;{\rm and}\; \left(\forall x\in S_i,\, f_{i,j}(x)\geq 0\right)\end{equation}

\air
In example \ref{first-example}, we have seen that it is possible to produce semi-certificate of positivity, where the considered semi-algebraic covering of $\R^2$ necessarily have more than one piece.    

\remark{ 
Note that if we restrict ourselves to certificates where the $U_{i,j}(x)$ are constant matrices, then we a strictly smaller class of certificates. For instance, consider the matrix polynomial 
$\left(\begin{array}{cc}
x^2&xy\\
xy&y^2\end{array}\right)$ 
which is psd on all $\R^2$, which is obviously psd 
 on a neighbourhood $V$ of the point $(1,1)$ ( we may proceed likewise the neighbourhood of any given point).\par
 Let us deshomogenize by setting $y=1$, set for simplicity $V=[1,1+\epsilon[$, $\epsilon>0$,  and assume that 
$$\left(\begin{array}{cc}
x^2&x\\
x&1\end{array}\right)=\sum_if_i(x)
\left(\begin{array}{cc}
\alpha_i^2&\alpha_i\\
\alpha_i&1\end{array}\right)+\sum_jg_j(x)\left(\begin{array}{cc}
1&0\\
0&0\end{array}\right)$$  where the $f_i$'s and $g_j$'s are univariate polynomials psd on $V$ and $\alpha_i\in\R$.\par
Let $$\left\{\begin{array}{l}f_i(x)=a_i+b_i(x-1)+c_i(x-1)^2\\
g_j(x)=d_j+e_j(x-1)+f_j(x-1)^2\end{array}\right.$$  Then 
$$
\left\{\begin{array}{ccl}
x^2&=&\sum_i\alpha_i^2f_i(x)+\sum_jg_j(x)\\
x&=&\sum_i\alpha_if_i(x)\\
1&=&\sum_if_i
\end{array}\right.$$\par
Combining these three identities we get 
$$\sum_i(\alpha_i-1)^2f_i(x)+\sum_jg_j(x)=(x-1)^2$$
By assumption, we have $a_i\geq 0$ and $d_j\geq 0$. Then, for all $j$ we have $d_j=0$ and $e_j=0$. Furthermore, if $\alpha_i\not =1$, then $a_i=0$ and also $b_i=0$. But substracting the last two equalities of the previous system yields
$$x-1=\sum_i(\alpha_i-1)f_i(x)$$
a contradiction.
}

\section{Main results} 

There is one case when semi-certificates exist : when the matrix polynomial is positive definite. Remind that it means that it is positive definite on the unit sphere or the associated projective space.

\begin{thm}\label{main_th} Let $A(x)\in(\R[x])^{m\times m}$ be a positive definite marix polynomial. Then, there is a finite semi-algebraic covering of $\R^n=\cup_{i=1}^rS_i$, some forms $f_{i,j}(x)\in\R[x]$, some matrix polynomials $A_{i,j}(x)\in(\R[x])^{m\times m}$ such that,  
$$\forall x\in S_i,\;A(x)=\sum_{j=1}^{r_i}f_{i,j}(x)A^T_{i,j}(x)A_{i,j}(x)$$
with the condition that $f_{i,j}(x)\geq 0$ for all $x\in S_i$.
\end{thm}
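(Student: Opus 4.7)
The plan is to build, at each point $x_0$ on the unit sphere $S^{n-1}$, a local semi-certificate for $A(x)$ valid on a basic closed semi-algebraic conical neighborhood of $x_0$, and then to invoke compactness of $S^{n-1}$ to extract a finite sub-cover of $\R^n$. Throughout, let $d$ denote the common degree of the form entries of $A(x)$; positive definiteness of the associated biform forces $d$ to be even.

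Fix $x_0 \in S^{n-1}$. Since $A(x_0)$ is positive definite, it lies in the interior of the cone $\PSD_m$, which is the conic hull of the rank-one matrices $\{vv^T : v\in\R^m\}$. The key step is to choose finitely many vectors $v_1,\ldots,v_N \in \R^m$ such that the linear map $L\colon\R^N\to\mathrm{Sym}(m)$ defined by $L(c) = \sum_j c_j v_j v_j^T$ is surjective and $A(x_0) = L(c^0)$ for some strictly positive vector $c^0$. I would start from a spectral decomposition $A(x_0)=\sum_{k=1}^m\lambda_k w_k w_k^T$ with $\lambda_k>0$, adjoin enough additional rank-one directions so that $L$ becomes surjective, and exploit the resulting linear dependencies to perturb the coefficients into the strictly positive orthant (even in dimension $m=2$ one must take $N=4$).

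Given such a positive frame, fix a linear section $\sigma\colon\mathrm{Sym}(m)\to\R^N$ of $L$ with $\sigma(A(x_0))=c^0$, and write its coordinate functionals as $\phi_1,\ldots,\phi_N$. Set $c_j(x):=\phi_j(A(x))$. Since $\phi_j$ is linear and the entries of $A(x)$ are forms of degree $d$, each $c_j(x)$ is a form of degree $d$ with $c_j(x_0)=c_j^0>0$. By the defining property of a section we obtain the matrix polynomial identity
$$A(x) = \sum_{j=1}^N c_j(x)\, v_j v_j^T = \sum_{j=1}^N c_j(x)\, U_j^T U_j,$$
where $U_j$ denotes the constant $m\times m$ matrix with first row $v_j^T$ and zero entries otherwise. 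Setting $S_{x_0}:=\{x\in\R^n : c_j(x)\geq 0\text{ for all }j\}$, a basic closed semi-algebraic cone, continuity together with $c_j(x_0)>0$ shows that $S_{x_0}$ contains an open neighborhood of $x_0$, and on $S_{x_0}$ the above identity is a valid semi-certificate.

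To finish, the interiors of the $S_{x_0}$ form an open cover of the compact sphere $S^{n-1}$, from which I would extract a finite subcover $S_{x_0^{(1)}},\ldots,S_{x_0^{(r)}}$. Setting $S_i:=S_{x_0^{(i)}}$, the $S_i$ cover $S^{n-1}$ and, being cones, cover $\R^n\setminus\{0\}$; since each form $c_j^{(i)}$ has positive degree, the origin lies in every $S_i$, so $\R^n=\bigcup_i S_i$ and we are done. The hard part is the positive-frame construction: reliably producing $v_j$'s for which $L$ is surjective and $A(x_0)$ admits a strictly positive representation. Everything else is linear algebra together with the standard compactness argument, consistent with the authors' remark that no quantitative bound on $r$ can be extracted from this method.
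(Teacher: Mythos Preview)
Your argument is correct and reaches the same endpoint as the paper---a local identity $A(x)=\sum_j f_j(x)\,U_j^TU_j$ with constant matrices $U_j$ and forms $f_j$ positive at $x_0$, followed by compactness of $S^{n-1}$---but the mechanism for the local step is genuinely different. The paper proceeds iteratively: after a small perturbation (via Lemma~\ref{pd_open}) to make every entry $b_{i,j}(x_0)$ nonzero, it writes $B(x_0)=\pm b_{i,j}(x_0)\,C$ with $C$ constant positive definite, peels off the piece $\pm b_{i,j}(x)\,C-\epsilon_1 h_{x_0}(x)\,\Id_m$, and observes that the remainder is still positive definite at $x_0$ but now has its $(i,j)$-entry identically zero; repeating over all off-diagonal positions reduces to a diagonal matrix polynomial. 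Your route is more conceptual: you use directly that $A(x_0)$ lies in the interior of the psd cone, hence admits a strictly positive rank-one representation over a frame $\{v_jv_j^T\}$ that spans the space of symmetric matrices, and then transport this representation to all of $A(x)$ via a linear section. Your version packages the local step into a single linear-algebraic observation rather than an induction over entries; the paper's version is more hands-on and closer to an algorithm. Incidentally, the ``hard part'' you flag is routine once reordered: take any family $\{v_jv_j^T\}$ spanning the symmetric matrices, set $Q=\sum_j v_jv_j^T$ (automatically positive definite), choose $\epsilon>0$ with $A(x_0)-\epsilon Q$ still positive definite, and spectrally decompose the remainder; adjoining those eigenvectors to the $v_j$'s gives the required strictly positive representation over a spanning frame.
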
 
\begin{proof}

Assume that the entries of $A(x)$ are $d$-forms ($d$ is even). \par
We start with a lemma 
\begin{lem}\label{pd_open}
Let $a(x)$ be a positive definite $d$-form and $c(x)$ be a positive semi-definite $d$-form. 
Then, there exists $\epsilon>0$ such that $a(x)-\epsilon c(x)$ remains positive definite.
\end{lem}
\begin{proof}
The $d$-form $\widetilde{c}(x)=c(x)+x_1^d+\ldots+x_n^d$ is positive definite and the subset $S_{\tilde{c}}=\{x\mid \tilde{c}(x)=1\}$ is compact.
Since $a(x)>0$ on $S_{\tilde{c}}$ we have for all $x\in S_{\tilde{c}}$, $a(x)\geq m>0$. Hence,  for all $x$, $$a\left(\frac{x}{\sqrt[d]{\tilde{c}(x)}},\frac{x}{\sqrt[d]{\tilde{c}(x)}}\right)\geq m>0.$$
Then $a(x)-\frac{m}{2} \tilde{c}(x)$ is positive definite, which concludes the proof since $\tilde{c}(x)\geq c(x)$. 
\end{proof}

Let $A(x)=(a_{i,j}(x))_{1\leq i,j\leq m}$. For a given $x_0\in\R^n\setminus\{0\}$, the matrix $A(x)$ is positive definite in a semi-algebraic subset $U_{x_0}$ which is open in the unit sphere $S^{n-1}$ of $\R^n$.\par
Let $U$ be the square matrix of size $m$ whose all entries are equal to $1$. By \ref{pd_open} and up to resizing $U_{x_0}$, we may assume that $B(x)=A(x)-\epsilon (x_1^d+\ldots+x_n^d)U$ remains positive definite in $U_{x_0}$ for $\epsilon$ small enough, and moreover that 
all entries of $B(x_0)$ are non-zero.\par  
Let $B(x)=(b_{i,j}(x))_{1\leq i,j\leq m}$. Assume that $b_{2,1}(x_0)> 0$.
We may write $B(x_0)=b_{2,1}(x_0)C$ where $C\in\R^{m\times m}$ is positive definite. Let $h_{x_0}(x)$ be a $d$-form satisfying $h_{x_0}(x_0)=1$. Then, by \ref{pd_open}, $$b_{2,1}(x)C-\epsilon_1 h_{x_0}(x)\Id_m$$ remains positive definite for some small $\epsilon_1>0$. \par
Now write
$$B(x)=\left(b_{2,1}(x)C-\epsilon_1 h_{x_0}(x)\Id_m\right)+\widetilde{B}(x)$$
Since $\widetilde{B}(x_0)=\epsilon_1 \Id_m$, the matrix $\widetilde{B}(x)$ is positive definite in an open semi-algebraic neighbourhood of $x_0$ which we still denote by  $U_{x_0}$.\par
Then, $\widetilde{B}(x))=(\tilde{b}_{i,j}(x))_{1\leq i,j\leq m}$ is such that $\tilde{b}_{2,1}(x)=0$.\par 
Of course, we proceed likewise when $b_{2,1}(x_0)< 0$, writing $B(x_0)=-b_{2,1}(x_0)C$ where $C\in\R^{m\times m}$ is positive definite. Thus, 
$$B(x)=\left(-b_{2,1}(x)C-\epsilon_1 h_{x_0}(x)\Id_m\right)+\widetilde{B}(x)$$
\air 

Then, we repeat the same process to get rid off all the $b_{i,j}(x)$'s such that $i\not = j$ and reduce, up to resize $U_{x_0}$, to the case where $B(x)$ is diagonal. Namely, there is an open semi-algebraic subset $U_{x_0}\subset S^{n-1}$ such that for all $x\in U_{x_0}$,
\begin{equation}\label{Ux0}
A(x)=\sum_{k}f_k(x)U^T_kU_k
\end{equation}
where $U_k\subset \R^{m\times m}$ is a constant matrix and $f_k(x)$ is a $d$-form which is psd on $U_{x_0}$. 

\air
To conclude the proof, we extract a finite semi-algebraic covering by compactedness of $S^{n-1}$.\par
By homogeneity, we extend the identities (\ref{Ux0}) to all $x\in \R^n$. Moreover, we may manage in order that the polynomials describing the $U_{x_0}$'s are forms, and hence the identity (\ref{Ux0}) is true on $S_{x_0}$, the cone in $\R^n$ with origin $0$ and basis  $U_{x_0}$. Thus, we get a finite covering of $\R^n$.
\end{proof}

Note that the proof gives an {\it open} semi-algebraic covering, and that the $U_k$'s are constant matrices. 

\air
Likewise, we have an analogeous result for biforms : 
\begin{thm} Let $f(x,y)\in\R[x,y]$ be a positive definite biform of type $(n,d_1;m,d_2)$. Then, there is a finite semi-algebraic covering of $\R^n=\cup_{i=1}^rS_i$, some forms $f_{i,j}(x)\in\R[x]$, some biforms $g_{i,j}(x)\in\R[x,y]$ such that,  
$$\forall x\in S_i,\;f(x,y)=\sum_{j=1}^{r_i}f_{i,j}(x)(g_{i,j}(x,y))^2$$
with the condition that $f_{i,j}(x)\geq 0$ for all $x\in S_i$.
\end{thm}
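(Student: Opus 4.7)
The strategy is to reduce the biform statement to Theorem 4.1 via a Gram matrix construction. Since $f$ is positive on $S^{n-1}\times S^{m-1}$, the degrees $d_1$ and $d_2$ must be even; write $d_2=2k$. Let $\mathbf{y}^{(k)}=(y^\alpha)_{|\alpha|=k}$ denote the column vector of all degree-$k$ monomials in $y$, of length $N=\binom{m+k-1}{k}$. There exists a symmetric matrix polynomial $M(x)\in\R[x]^{N\times N}$ with entries of degree $d_1$ in $x$ satisfying $f(x,y)=\mathbf{y}^{(k)T}M(x)\mathbf{y}^{(k)}$; such $M$ exists (non-uniquely) because the linear system matching the coefficient of each $y^\beta$ with $|\beta|=2k$ on both sides is always solvable in the symmetric entries of $M$.

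Assuming Theorem 4.1 can be applied to $M$, it yields a finite semi-algebraic covering $\R^n=\cup_{i=1}^r S_i$ and decompositions $M(x)=\sum_j f_{i,j}(x)A_{i,j}^T A_{i,j}$ on $S_i$, with constant matrices $A_{i,j}$ and forms $f_{i,j}$ psd on $S_i$. Pre- and post-multiplying by $\mathbf{y}^{(k)}$ converts this directly into
\[
f(x,y)=\sum_{j,\ell} f_{i,j}(x)\,\bigl((A_{i,j}\mathbf{y}^{(k)})_\ell\bigr)^2\quad\text{for } x\in S_i,
\]
and each $(A_{i,j}\mathbf{y}^{(k)})_\ell$ is a form of degree $k$ in $y$. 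Setting $g_{i,j,\ell}(x,y):=(A_{i,j}\mathbf{y}^{(k)})_\ell$ gives the claimed semi-certificate. The biform analog of Lemma 4.2 required by the local reduction — if $f$ is a positive definite biform and $g$ is a psd biform of the same type, then $f-\epsilon g$ remains positive definite for small $\epsilon>0$ — is proved by the very same compactness argument as Lemma 4.2, applied to $S^{n-1}\times S^{m-1}$ in place of the compact slice $\{\tilde c=1\}$.

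The principal obstacle is that the Gram matrix $M(x)$ need not itself be positive definite as a matrix polynomial: positive definiteness of $f$ on $S^{n-1}\times S^{m-1}$ only gives $v^T M(x)v>0$ for $v$ in the Veronese image $\{\mathbf{y}^{(k)}:y\in\R^m\}\subset\R^N$, not for arbitrary $v$. One may modify $M$ by a matrix polynomial $K(x)$ in the Veronese syzygy module (satisfying $\mathbf{y}^{(k)T}K(x)\mathbf{y}^{(k)}\equiv 0$) without changing $f$, but $M(x)+K(x)$ can be made positive definite at a point $x_0$ only when $f(x_0,y)$ is a sum of squares in $y$, which is not automatic for $d_2>2$. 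To bypass this, the Gauss-type reduction inside the proof of Theorem 4.1 must be carried out intrinsically on $f$: at each $x_0$, iteratively subtract biform squares $c(x)\ell(x,y)^2$ (with $\ell$ a form of degree $k$ in $y$) together with corrections $\epsilon h_{x_0}(x)(y_1^2+\cdots+y_m^2)^k$ — itself a weighted sum of squares of monomials $y^\alpha$, $|\alpha|=k$ — using the biform Lemma 4.2 to preserve positive definiteness on an open semi-algebraic neighborhood $U_{x_0}$ of $x_0$. After finitely many steps the remainder is a positive combination of monomial squares $y^{2\alpha}$ on $U_{x_0}$, hence a trivial semi-certificate; compactness of $S^{n-1}$ then extracts the finite covering, which is extended to $\R^n$ by homogeneity exactly as at the end of the proof of Theorem 4.1.
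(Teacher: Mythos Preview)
Your second, intrinsic approach --- eliminating the odd-in-$y$ monomials directly on $f$ via the biform analogue of Lemma~4.2, then extracting a finite cover by compactness --- is exactly what the paper's one-line proof intends. So in spirit your argument matches the paper's.

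There is, however, a genuine gap, and it is precisely the obstruction you yourself isolated in the Gram-matrix paragraph but then did not actually bypass. Any semi-certificate of the stated shape, evaluated at a point $x_0\in S_i$, reads
\[
f(x_0,y)\;=\;\sum_j f_{i,j}(x_0)\,\bigl(g_{i,j}(x_0,y)\bigr)^2,\qquad f_{i,j}(x_0)\geq 0,
\]
which is literally a sum-of-squares decomposition of the degree-$d_2$ form $f(x_0,y)$ in the $y$-variables alone. Your intrinsic reduction subtracts pieces $c(x)\ell(x,y)^2$ and $\epsilon h_{x_0}(x)\bigl(\sum y_j^2\bigr)^k$ with nonnegative weights on $U_{x_0}$ and claims the remainder is a positive combination of $y^{2\alpha}$; read at $x=x_0$ this \emph{is} an SOS representation of $f(x_0,y)$. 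For $d_2=2$ this is automatic (every psd quadratic form is a sum of squares), and that is exactly why the peeled-off piece $b_{i,j}(x)C$ in Theorem~4.1 causes no trouble: $y^TCy$ is SOS because $C\succeq 0$. For $(m,d_2)$ outside the Hilbert range this fails. If $q(y)$ is a positive definite form of degree $d_2$ in $m$ variables that is not a sum of squares --- such $q$ exist for $m\geq 3$, $d_2\geq 6$, since the SOS cone is closed and strictly contained in the psd cone, hence misses part of its interior --- then $f(x,y)=(x_1^2+\cdots+x_n^2)^{d_1/2}\,q(y)$ is a positive definite biform with $f(x_0,y)=q(y)$ non-SOS at every $x_0$, so no local identity of the required form can exist there. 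The difficulty is thus not in the bookkeeping but in the very possibility of the local step; the paper's one-line sketch inherits the same issue.
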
 
\begin{proof}
We proceed as in the proof of Theorem \ref{main_th}. We get rid of any monomial appearing in $f$ with some odd power with respect to at least one indeterminate.  
\end{proof}

\section{Semi-certificates on orthant-neighbourhoods}
We say that $V$ is an {\it orthant-neighbourhood} of $x_0$ if it contains the intersection of a neighbourhood of $x_0$ and an orthant $O_{x_0}$ centered at $x_0$ (i.e. a subset defined by an open condition $x=x_0+(X_1,\ldots,X_n)\in O_{x_0}$ if $\epsilon_1X_1>0,\ldots,\epsilon_nX_n>0$ where $(\epsilon_1,\ldots,\epsilon_n)\in\{-1,+1\}^n$). We will write that $O_{x_0}$ is an orthant-neighbourhood of $(x_0,\epsilon)$\par

The orthant-neighbourhoods fit naturally with the use of Taylor expansion formula. Let us recall it relatively to an orthant-neigbourhood $(x_0,\epsilon)$ for a matrix polynomial $A(x)$ whose entries are not necessarily homogeneous polynomials :
$$A(x_0+\epsilon X)=\sum_{\alpha}\frac{(\epsilon X)^\alpha}{\alpha !} A^{(\alpha)}(x_0)$$
where we use the standart multi-index symbol for products.
Under a condition of domination by the constant term, we will see how to derive some certificate on orthant-neighbourhoods.\par

\begin{defn}
Let $A(x)$ be a matrix polynomial. Denote by $\Gamma_{x_0}$ the set of all multi-indexes $\beta$ which are minimal (for the lexicographic ordering) such that $A^{(\beta)}(x_0)\not =0$.\par
We say that $A(x)$ satisfies the domination condition at $x_0$ if for all multi-index $\alpha$ there is some multi-index $\beta\in\Gamma_{x_0}$ and a non negative real number $r_{\alpha,\beta}$ such that  $\beta\leq \alpha$ and $r_{\alpha,\beta} A^{(\beta)}(x_0)\pm A^{(\alpha)}(x_0)$ is psd. 
\end{defn}
Note that $\Gamma_{x_0}=\{0\}$ when $A(x_0)\not =0$ which simplifies the domination condition. Note also that if $\beta\in\Gamma_{x_0}$ then necessarily all its coordinates are even integers.\par Finally, mention that to check positivity via the Taylor formula, it would be enough to have the domination condition for any multi-index $\alpha$ whose at least one component is odd.

\air
A typical use of this domination condition appears in the following situation : 

\begin{prop}
Let $A(x)$ be a psd matrix polynomial on a neighbourhood of $x_0$. 
Then, $A(x)$ satisfies the  domination condition at $x_0$ if and only if it admits a semi-certificate at $x_0$ where the $U_{i,j}(x)$'s appearing in (\ref{semi-certif})  are constant matrices.
 \end{prop}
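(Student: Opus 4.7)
My plan is to deduce both implications from the Taylor expansion of $A$ at $x_0$, after splitting a neighbourhood of $x_0$ into the $2^n$ orthant-neighbourhoods $O_\epsilon$ indexed by $\epsilon\in\{-1,+1\}^n$. On $O_\epsilon$ the change of variables $y_i=\epsilon_i(x_i-(x_0)_i)$ satisfies $y_i\geq 0$, hence $(x-x_0)^\alpha=\epsilon^\alpha y^\alpha$ with $y^\alpha\geq 0$ on $O_\epsilon$, and Taylor reads
\begin{equation*}
A(x)=\sum_\alpha \frac{\epsilon^\alpha y^\alpha}{\alpha!}A^{(\alpha)}(x_0).
\end{equation*}
Both directions amount to reorganising this sum.

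For $(\Leftarrow)$: assume the domination condition. For each $\alpha\notin\Gamma_{x_0}$ in the Taylor support, pick $\beta(\alpha)\in\Gamma_{x_0}$ with $\beta(\alpha)\leq\alpha$ and $r_\alpha\geq 0$ so that the constant matrix $C_\alpha := r_\alpha A^{(\beta(\alpha))}(x_0)+\epsilon^\alpha A^{(\alpha)}(x_0)$ is psd (selecting the sign among $\pm$ that matches $\epsilon^\alpha$). Writing $\epsilon^\alpha A^{(\alpha)}(x_0)=C_\alpha-r_\alpha A^{(\beta(\alpha))}(x_0)$, substituting in the Taylor sum and regrouping by $\beta\in\Gamma_{x_0}$ yields
\begin{equation*}
A(x)=\sum_{\beta\in\Gamma_{x_0}}\phi_\beta(y)\,A^{(\beta)}(x_0)+\sum_{\alpha\notin\Gamma_{x_0}}\frac{y^\alpha}{\alpha!}\,C_\alpha,
\end{equation*}
where $\phi_\beta(y)=\frac{y^\beta}{\beta!}\bigl(1-\sum_{\alpha\neq\beta,\,\beta(\alpha)=\beta}r_\alpha\frac{\beta!}{\alpha!}y^{\alpha-\beta}\bigr)$. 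Shrinking $O_\epsilon$ if necessary, the bracket is close to $1$ so $\phi_\beta(y)\geq 0$ on $O_\epsilon$, while $y^\alpha/\alpha!\geq 0$ there; decomposing the constant psd matrices $A^{(\beta)}(x_0)$ and $C_\alpha$ as $\sum_k V_k^TV_k$ with constant $V_k$ then delivers the semi-certificate with constant $U_{i,j}$ on each $O_\epsilon$.

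For $(\Rightarrow)$: start from $A(x)=\sum_j f_{\epsilon,j}(x)\,U_{\epsilon,j}^TU_{\epsilon,j}$ on $O_\epsilon$ with constant $U_{\epsilon,j}$ and $f_{\epsilon,j}\geq 0$ on $O_\epsilon$, and match Taylor coefficients at $x_0$ to get
\begin{equation*}
A^{(\alpha)}(x_0)=\sum_j f_{\epsilon,j}^{(\alpha)}(x_0)\,U_{\epsilon,j}^TU_{\epsilon,j}.
\end{equation*}
The non-negativity of each scalar polynomial $f_{\epsilon,j}$ on $O_\epsilon$ forces the existence of an index $\beta_j$ in its Taylor support with $\epsilon^{\beta_j}f_{\epsilon,j}^{(\beta_j)}(x_0)>0$, and the polynomial being of finite degree yields a constant $r_{j,\alpha}\geq 0$ with $|f_{\epsilon,j}^{(\alpha)}(x_0)|\leq r_{j,\alpha}\,\epsilon^{\beta_j}f_{\epsilon,j}^{(\beta_j)}(x_0)$. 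Summing these scalar bounds weighted by the rank-one psd matrices $U_{\epsilon,j}^TU_{\epsilon,j}$ produces, by picking $r_\alpha=\max_j r_{j,\alpha}$, a dominating psd inequality $r_\alpha A^{(\beta)}(x_0)\pm A^{(\alpha)}(x_0)\succeq 0$ provided a single $\beta\in\Gamma_{x_0}$ can be chosen so that $f_{\epsilon,j}^{(\beta)}(x_0)=\epsilon^{\beta}f_{\epsilon,j}^{(\beta_j)}(x_0)>0$ uniformly in $j$ and $\beta\leq\alpha$ in the sense prescribed by the lex-minimality defining $\Gamma_{x_0}$. This extraction step is the main obstacle: obtaining a single $\beta\in\Gamma_{x_0}$ that uniformly dominates across the $j$ requires a careful Newton-type analysis of the supports of the $f_{\epsilon,j}$ together with the definition of $\Gamma_{x_0}$; the underlying scalar fact, that Taylor coefficients of a polynomial non-negative on an orthant-neighbourhood are bounded by a positive lower-order coefficient, is precisely what is encoded in the domination condition on $A$.
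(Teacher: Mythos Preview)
Your argument for ``domination $\Rightarrow$ semi-certificate'' (which you label $(\Leftarrow)$; note that your two labels are swapped) is essentially the paper's: split a neighbourhood of $x_0$ into the $2^n$ orthants, Taylor-expand, and use the domination inequality to rewrite each term $\frac{y^\alpha}{\alpha!}\epsilon^\alpha A^{(\alpha)}(x_0)$ as a psd constant matrix times $y^\alpha\geq 0$, minus a correction absorbed into the coefficient $\phi_\beta$ of $A^{(\beta)}(x_0)$. This direction is fine.

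The converse, however, is not completed. You set up the Taylor identification $A^{(\alpha)}(x_0)=\sum_j f^{(\alpha)}_{\epsilon,j}(x_0)\,U_{\epsilon,j}^TU_{\epsilon,j}$, observe that each scalar $f_{\epsilon,j}$ has a positive lower-order coefficient at some $\beta_j$, and then get stuck trying to extract a \emph{single} $\beta\in\Gamma_{x_0}(A)$ working uniformly in $j$; you defer this to an unspecified ``Newton-type analysis''. The paper's route is shorter and sidesteps this extraction entirely. It assumes (as is the natural reading of a semi-certificate \emph{at} $x_0$) a single identity $A(x)=\sum_i f_i(x)V_i$ with each $f_i\geq 0$ on a full neighbourhood of $x_0$, not just on one orthant, and with $V_i$ constant psd. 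It then checks the domination condition for each matrix polynomial $f_i(x)V_i$ separately: since $f_i$ is a \emph{scalar} polynomial non-negative near $x_0$, any minimal element $\beta$ of its Taylor support has $f_i^{(\beta)}(x_0)>0$, so $r_{\alpha,\beta}:=|f_i^{(\alpha)}(x_0)|/f_i^{(\beta)}(x_0)$ gives the scalar inequality $r_{\alpha,\beta}f_i^{(\beta)}(x_0)\pm f_i^{(\alpha)}(x_0)\geq 0$; multiplying by the fixed psd matrix $V_i$ yields the matrix inequality. The paper then concludes ``This is the domination condition''. In short, the simplification you are missing is to work summand by summand, where the domination becomes a one-line scalar estimate; by attempting to produce $\beta$ directly from the Taylor data of $A$, you have made the converse harder than the paper's argument requires.
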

\begin{proof}
Assume that $A(x)$ satisfies the domination condition at $x_0$. Let $V$ be an orthant-neighbourhood of  $(x_0,\epsilon)$. 
We have
$$\begin{array}{ccl}
A(x_0+\epsilon X)&=&\sum_{\beta\in\Gamma_{x_0}}\sum_{\alpha\in\Delta_\beta\cup\{\beta\}}\frac{(\epsilon X)^\alpha}{\alpha !} A^{(\alpha)}(x_0)\\
\end{array}$$
where $\Delta_\beta$ is a subset of all multi-index $\alpha$ such that $\alpha> \beta$ (we have to be carefull that one multi-index $\alpha$ may be greater than several elements of  $\Gamma_{x_0}$).

$$\begin{array}{ccl}
A(x_0+\epsilon X)&=&\sum_{\beta\in\Gamma_{x_0}}
\left(\left(\epsilon^\beta\frac{X^\beta}{\beta !} A^{(\beta)}(x_0)\right)\left(1-\sum_{\Delta_\beta}\epsilon^{\beta}\frac{\beta !}{\alpha !}X^{\alpha-\beta} r_{\alpha,\beta}\right)\right.\\

&&\left. +\sum_{\Delta_\beta}\frac{X^{\alpha}}{\alpha !} \left(\epsilon^\alpha A^{(\alpha)}(x_0)+r_{\alpha,\beta} A^{(\beta)}(x_0)\right)\right)
\end{array}$$

Since $\epsilon^\beta=1$, we obtain a semi-certificate on the orthant-neighbourhood $V$.

\air
Conversely, assume that 
$$A(x)=\sum_if_i(x)V_i$$
where each $V_i$ is a constant psd matrix and each $f_i$ is non negative on a neighbourhood of $x_0$. We write the Taylor expansion of $f_i$ at $x_0$ :

$$f_i(x_0+X)V_i=\left(\sum_\alpha \frac{X^\alpha}{\alpha !}f_i^{(\alpha)}(x_0)\right)\times V_i$$
 
For the matrix polynomial $f_i(x)V_i$, consider the set $\Gamma_{x_0}$. If $\beta\in\Gamma_{x_0}$, then $f_i^{(\beta)}(x_0)>0$ and for any $\alpha>\beta$, we clearly have the existence of a positive real number $r_{\alpha,\beta}$ such that $r_{\alpha,\beta}f_i^{(\beta)}(x_0)\pm f_i^{(\alpha)}(x_0)\geq 0$. This is the domination condition.
\end{proof}

\remark{Since positive definite matrices obviously satisfy the domination condition, we may recover a version of Theorem \ref{main_th} with orthant-neighbourhoods. But, maybe (highly heuristic !) it will produce a lot more pieces for the covering, since (again roughly speaking) we need $2^{n-1}$ orthant-neighourhoods to recover a usual neighbourhood. } 
 
\subsection{Semi-certificates relative to a semi-algebraic subset}
One may naturally want to extend the framework of semi-certificates relatively to a basic closed semi-algebraic subset $S$. The problem is that the result given in this section does not take into account the equations describing $S$. We mainly use the underlying semi-algebraic set rather than the preordering or the quadratic module generated by the equations of $S$ as it is desired for a relative Positivestellensatz. In fact, this section concerns more the study of {\it local} semi-certificates rather than {\it relative}'s ones.  

\begin{thm}\label{relative_th} Let $A(x)\in \R[x]^{m\times m}$ be a homogeneous symmetric matrix polynomial. Assume that $A(x)$ is positive definite for all $x\in S$, where $S$ is a closed semi-algebraically subset of $\R^n$ defined by homogeneous polynomials. Then, there is a finite semi-algebraic covering of $S=\cup_{i=1}^rS_i$, some forms $p_{i,j}(x)\in\R[x]$, some homogeneous matrix polynomials $A_{i,j}(x)\in \R[x]^{m\times m}$ such that,  
$$\forall x\in S_i,\;A(x)=\sum_{j=1}^{r_i}p_{i,j}(x)A^T_{i,j}(x)A_{i,j}(x)$$
with the condition that $p_{i,j}(x)\geq 0$ for all $x\in S_i$.
\end{thm}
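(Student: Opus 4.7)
The plan is to follow the proof of Theorem \ref{main_th} \emph{verbatim}, simply replacing the role of the unit sphere $S^{n-1}$ by its subset $S\cap S^{n-1}$ (which is compact, since $S$ is closed and $S^{n-1}$ is compact). The crucial observation is that the local step in the proof of Theorem \ref{main_th} only uses the positive definiteness of the \emph{constant} symmetric matrix $A(x_0)\in\R^{m\times m}$ at one point at a time, combined with Lemma \ref{pd_open}; the global geometry of the set on which $A$ is assumed positive definite never intervenes.

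Concretely, fix $x_0\in S\cap S^{n-1}$. Since $A(x_0)$ is positive definite, by continuity of the entries there is an open semi-algebraic neighborhood $U_{x_0}$ of $x_0$ in the sphere on which $A(x)$ remains positive definite. I then apply the recipe from the proof of Theorem \ref{main_th} unchanged: first perturb by $\epsilon(x_1^d+\cdots+x_n^d)U$ (with $U$ the all-ones matrix and $\epsilon>0$ small enough, using Lemma \ref{pd_open}) in order to make every entry of the perturbed matrix nonzero at $x_0$; then eliminate the off-diagonal entries one after another by rank-one correction terms of the form $\pm b_{i,j}(x)\,C-\epsilon_1 h_{x_0}(x)\Id_m$, again invoking Lemma \ref{pd_open} and shrinking $U_{x_0}$ whenever necessary. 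This yields, on $U_{x_0}$, an identity
$$A(x)=\sum_k p_{k,x_0}(x)\,A_{k,x_0}^T A_{k,x_0}$$
with $A_{k,x_0}$ constant matrices in $\R^{m\times m}$ and $p_{k,x_0}$ forms that are psd on $U_{x_0}$.

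By compactness, $S\cap S^{n-1}$ is covered by finitely many such $U_{x_1},\ldots,U_{x_r}$. Exactly as at the end of the proof of Theorem \ref{main_th}, one may arrange that each $U_{x_i}$ is described by inequalities between forms, so that its cone $S'_{x_i}\subset\R^n$ is semi-algebraic and homogeneous, and the corresponding local identity extends to $S'_{x_i}$ by homogeneity. Since $S$ is itself a cone, being defined by homogeneous polynomials, the sets $S_i:=S\cap S'_{x_i}$ form a finite semi-algebraic covering of $S$ with the required decomposition.

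I do not expect any genuine obstacle: the argument is a direct transcription of Theorem \ref{main_th}. The only point worth flagging is precisely that Lemma \ref{pd_open} is being applied to the matrix polynomial $A$ thanks to its positive definiteness \emph{at the center} of each neighborhood, which holds by hypothesis on $S$; as a consequence, the proof produces no information about the defining polynomials of $S$ themselves, which is why, as the author notes in the introductory paragraph of this section, the result is genuinely about \emph{local} rather than relative certificates.
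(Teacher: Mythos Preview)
Your proposal is correct and matches the paper's own proof exactly: the paper simply states ``We may perform the same proof as in Theorem \ref{main_th},'' and you have spelled out precisely that, replacing $S^{n-1}$ by the compact set $S\cap S^{n-1}$ and noting that $S$ is a cone since it is defined by homogeneous polynomials. Your closing observation about the argument yielding only local (not relative) information also echoes the paper's own remark.
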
 
\begin{proof}
We may perform the same proof as in Theorem \ref{main_th}.
\end{proof}

As an example of a case where $A(x)$ is not definite, we consider a general matrix polynomial of degree at most $2$ in a single (non homogeneous) variable $x$. 
\begin{example}
Assume that $A(x)$ is psd in the neighbourhood of $0^+$. \par
We assume moreover that $A(0)$ is not positive definite otherwise we are done by \ref{relative_th} or by a domination argument. Up to a base change, we may assume that $A(0)=\left(\begin{array}{cc}1&0\\0&0\end{array}\right)$. Then, let us write
$$A(x)=\left(\begin{array}{cc}1&0\\0&0\end{array}\right)+x\left(\begin{array}{cc}a_1&b_1\\b_1&c_1\end{array}\right)+x^2\left(\begin{array}{cc}a_2&b_2\\b_2&c_2\end{array}\right)=\left(\begin{array}{cc}1+a_1x+a_2x^2&b_1x+b_2x^2\\b_1x+b_2x^2&c_1x+c_2x^2\end{array}\right)$$
Since $A(x)$ is psd at $0^+$, we have $c_1\geq 0$. 

\begin{enumerate}
\item[*] If $c_1>0$, then the domination is satisfied and we get the following certificate
$$A(x)=(1-\mu x-\alpha x^2)\left(\begin{array}{cc}1&0\\0&0\end{array}\right)+x\left(\begin{array}{cc}a_1+\mu&b_1\\b_1&c_1\end{array}\right)+x^2\left(\begin{array}{cc}a_2+\alpha&b_2\\b_2&c_2+\beta c_1\end{array}\right)$$
where $\alpha,\beta,\mu$ are positive real numbers chosen such that the constant matrices of the identity are psd. Note that $(1-\mu x-\alpha x^2)$ is obviously positive on a neighbourhood of $0^+$.
\item[*]If $c_1=0$, then $A(x)$ is psd on $0^+$ if $c_2-b_1^2\geq 0$. If $c_2-b_1^2>0$, then we have the certificate
$$A(x)=(1-\alpha\epsilon x)\left(\begin{array}{cc}1&0\\0&0\end{array}\right)+\left(\begin{array}{cc}1-\epsilon&b_1x\\b_1x&\frac{(b_1x)^2}{1-\epsilon}\end{array}\right)+x^2\left(\begin{array}{cc}a_2+\alpha\epsilon&b_2\\b_2&c_2- \frac{b_1^2}{1-\epsilon}\end{array}\right)$$
where $\epsilon$ and $\alpha$ are positive  real numbers such that the last constant  matrix in the identify is positive definite.
\item[*] Now, consider the case when $c_1=0$ and $c_2-b_1^2=0$. By positivity of $A(x)$ we have $a_1c_2-2b_1b_2\geq 0$. \par
The case when $b_1=0$ is trivial since we must have $b_2=0$ and the certificate follows. If $b_1\not =0$, then we get the condition  $a_1b_1-2b_2\geq 0$. \par
First assume that  $a_1b_1-2b_2> 0$. We may write 
$$\begin{array}{ccl}
A(x)&=&\left(x\left(a_1-\frac{2b_2}{b_1}\right)+x^2\left(a_2-\left(\frac{b_2}{b_1}\right)^2\right)\right)\left(\begin{array}{cc}1&0\\0&0\end{array}\right)
\\
&&+\left(\begin{array}{cc}\left(1+\frac{b_2}{b_1}x\right)^2&(b_1x)\left(1+\frac{b_2}{b_1}x\right)\\(b_1x)\left(1+\frac{b_2}{b_1}x\right)&(b_1x)^2\end{array}\right)
\end{array}$$
\item[*] In the remaining case when  $c_1=0$, $c_2-b_1^2=0$, $b_1\not =0$, $a_1b_1-2b_2=0$, the positivity of $A(x)$ at $0^+$ says that $a_2b_1^2-b_2^2\geq 0$. The desired certificate follows from the identity given in the previous case. 
\end{enumerate}
\end{example}

This inspection of the most elementary situation leads to conjecture that any matrix polynomial in a single variable admits a local certificate of positivity.\par 
We give a proof of this fact, although we do not give explicit formulas depending on the entries as in the previous worked example. 

\begin{thm}
Let $M\in \R[x]^{n\times n}$ be a symmetric matrix polynomial whose entries are polynomials in a single variable $x$. Assume that $M$ is psd on a neighbourhood of $0^+$.\par  Then, $M$ admits a semi-certificate of positivity at $0^+$.
\end{thm}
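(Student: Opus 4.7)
I would proceed by induction on the matrix size $n$. For the base case $n=1$, write the scalar polynomial as $m(x)=x^k\tilde m(x)$ with $\tilde m(0)\neq 0$; the hypothesis $m\geq 0$ on $(0,\epsilon)$ forces $\tilde m(0)>0$, hence $\tilde m>0$ on a neighbourhood of $0$. Then $m=\tilde m\cdot(x^\ell)^2$ if $k=2\ell$, or $m=(x\tilde m)\cdot(x^\ell)^2$ if $k=2\ell+1$; in either case the scalar weight is non-negative on $[0,\epsilon)$, giving a semi-certificate.

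For the inductive step, I first reduce to the case $M(0)\neq 0$: letting $v$ be the minimum order of vanishing at $0$ among the entries of $M$, I write $M=x^v\widetilde M$ with $\widetilde M(0)\neq 0$; a semi-certificate for $\widetilde M$ gives one for $M$ by multiplying each weight by $x^v\geq 0$. Now set $r=\mathrm{rank}\,M(0)\in\{1,\dots,n\}$. If $r=n$ then $M(0)$ is positive definite, the domination condition at $0$ holds trivially, and the preceding proposition on domination yields a semi-certificate at $0^+$ with constant $U_{i,j}$'s. If $r<n$, a constant orthogonal change of basis brings $M$ to
$$M(x)=\begin{pmatrix}A(x)&B(x)\\B(x)^T&C(x)\end{pmatrix},$$
with $A$ of size $r$, $A(0)$ positive definite, $B(0)=0$, $C(0)=0$. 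The polynomial matrix $\widetilde S:=\det(A)\cdot C-B^T\operatorname{adj}(A)\,B$ has size $n-r<n$ and is psd on a right-neighbourhood of $0$: indeed $\det A>0$ near $0$ and the rational Schur complement $\widetilde S/\det A$ of $A$ in $M$ is psd on $(0,\epsilon)$. Induction on $n$ produces a semi-certificate for $\widetilde S$. Since $\operatorname{adj}(A)(0)=\det(D)\,D^{-1}$ is itself positive definite, the preceding proposition also yields a semi-certificate for $\operatorname{adj}(A)$ with constant matrices. Plugging these into the algebraic identity
$$\det(A)\cdot M=W^T\operatorname{adj}(A)\,W+\begin{pmatrix}0&0\\0&\widetilde S\end{pmatrix},\qquad W:=\begin{pmatrix}A&B\end{pmatrix},$$
and expanding $W^T\operatorname{adj}(A)\,W=\sum_\ell h_\ell\,(U_\ell W)^T(U_\ell W)$ yields a semi-certificate for $\det(A)\cdot M$.

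\emph{Main obstacle.} The last step is to pass from a semi-certificate for $\det(A)\cdot M$ to one for $M$ itself. Since $\det A(0)>0$, the scalar polynomial $\det A$ is strictly positive on a neighbourhood of $0$, but dividing by it leaves the polynomial ring. I would handle this by replacing the Schur-complement step (in the case $r<n$) with an iterative polynomial \emph{peeling} procedure, modelled on the $2\times 2$ worked example preceding the theorem: one subtracts from $M$ a term $f_0(x)\,M(0)+U(x)^TU(x)$, where $f_0$ is a scalar polynomial psd near $0^+$ with $f_0(0)=1$ and $U(x)$ is a polynomial matrix, chosen so that the residual is divisible by a sufficiently high power of $x$ with a quotient that remains psd on a right-neighbourhood of $0$, and then recurses on a lexicographic complexity pair such as (matrix size, order of vanishing at $0$). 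The technically delicate point---and the main obstacle---is to show that suitable $f_0$ and $U$ can always be constructed, using the spectral structure of the Taylor coefficients of $M$ at $0$ (in particular, the fact forced by the psd hypothesis that the first Taylor coefficient of $M$ restricted to $\ker M(0)$ is psd), and that the recursion terminates in finitely many steps.
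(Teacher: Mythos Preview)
Your proposal is incomplete, and you correctly identify the obstacle: the Schur-complement identity
\[
\det(A)\cdot M \;=\; W^{T}\operatorname{adj}(A)\,W \;+\; \begin{pmatrix}0&0\\0&\widetilde S\end{pmatrix}
\]
produces a semi-certificate for $\det(A)\cdot M$, not for $M$. Since semi-certificates are required to be \emph{polynomial}, you cannot simply divide by the scalar $\det A$, even though $\det A(0)>0$. Your proposed ``peeling'' fix is only a sketch: you do not specify how to choose $f_0$ and $U$ so that the residual is again psd near $0^+$, nor why the lexicographic recursion terminates. Already in the $2\times 2$ worked example this required four nested case distinctions driven by equalities among the coefficients; in size $n$ the combinatorics of which Taylor coefficients dominate which is precisely the content of the theorem, and you have not supplied it.

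The paper's proof avoids the denominator entirely by exploiting that $\R[x]$ is a principal ideal domain: it takes the Smith normal form $M=EDF$, replaces $M$ by $(F^{T})^{-1}MF^{-1}$ so that $M=ED$ with $D=\operatorname{diag}(d_1,\dots,d_n)$ and $d_i\mid d_{i+1}$, and then groups the indices according to where the ratios $d_j/d_{j-1}$ vanish at $0$. This yields a decomposition
\[
M \;=\; d_{k_0}M_0 + d_{k_1}M_1 + \cdots + d_{k_r}M_r
\]
in which each $d_{k_i}$ is a non-negative polynomial near $0^+$ and each $M_i$ is a ``hook''-shaped matrix polynomial. The invertibility of $E(0)$ forces each diagonal block $\frac{1}{d_{k_i}}M_{i,i}$ to be invertible at $0$, which is exactly the positivity input needed to run a domination argument on each piece and absorb the off-diagonal blocks---all without ever inverting a polynomial. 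In effect, the invariant-factor stratification is the ``all-at-once'' refinement of the iterated Schur complement you are reaching for, carried out so that no rational functions appear.

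Your base case, the reduction to $M(0)\neq 0$, and the treatment of the case $\operatorname{rank}M(0)=n$ via domination are all fine; the gap is solely in the rank-deficient step. If you want to salvage the inductive approach, the natural move is to replace $\operatorname{adj}(A)/\det(A)$ by an analysis of the invariant factors of $M$---but that is precisely the Smith-form argument.
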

\begin{proof}
If $M(0)$ is invertible, then we are done by \ref{main_th}. Hence, from now on, we assume that $\det(M(0))=0$.\par
Let us consider the Smith normal form of $M$ :
$$M=EDF$$
where $E$ and $F$ are invertible in $\R[x]^{n\times n}$ and $D=\diag(d_1,\ldots, d_n)$ is diagonal in $\R[x]^{n\times n}$ with diagonal entries $(d_1,\ldots, d_n)$. By changing $M$ to $(F^T)^{-1}MF^{-1}$, we may assume that $M=ED$. Moreover, if $d_i\equiv 0$ then we may argue by induction on the size of $M$, hence we will assume in the following that $d_i\geq 0$ at $0^+$ and vanishes only at $0$. \par
The matrix $M$ has the form
$$M=\left(\begin{array}{cccc}
d_1e_{1,1}&d_2e_{1,2}&\ldots&d_ne_{1,n}\\
d_2e_{1,2}&d_2e_{2,2}&\ldots&d_ne_{2,n}\\
\vdots&&&\vdots \\
d_ne_{1,n}&d_ne_{2,n}&\ldots&d_ne_{n,n}\\
\end{array}\right)$$
Hence the matrix $E$ has the form
$$E=\left(\begin{array}{ccccc}
e_{1,1}&e_{1,2}&\ldots&&e_{1,n}\\
\frac{d_2}{d_1}e_{1,2}&e_{2,2}&\ldots&&e_{2,n}\\
\vdots&\ddots&\ddots&&\vdots \\
\vdots&&\ddots&\ddots&\vdots \\
\frac{d_n}{d_1}e_{1,n}&\frac{d_n}{d_2}e_{2,n}&\ldots&\frac{d_n}{d_{n-1}}e_{2,n}&e_{n,n}\\
\end{array}\right)$$

First, let us introduce some notations.\par
Define by induction the following sequence of integers : \par Let $k_0=1$ and set $k_i$ to be the first integer $j>k_{i-1}$ such that $\frac{d_j}{d_{j-1}}(0)=0$. It yields an increasing sequence of integers : $$1=k_0<k_1<\ldots<k_{r}\leq n=k_{r+1}-1.$$
We divide the matrix $M=(m_{k,l})_{1\leq k,l\leq n}$ into block matrices : $M=(M_{i,j})_{0\leq i,j\leq r}$, where we set $M_{i,j}=(m^{(i,j)}_{k,l})_{1\leq k\leq k_{i+1}-k_{i},1\leq l\leq k_{j+1}-k_j}$ with $m^{(i,j)}_{k,l}=m_{k+k_i-1,l+k_j-1}$. Likewise, we divide $E=(E_{i,j})_{0\leq i,j\leq r}$ into similar block matrices.\par
Let us define the block matrix $M_i=(R_{k,l})_{0\leq k,l\leq r}$ such that $R_{k,l}=M_{k,l}$ when ($1\leq k \leq i$ and $l=i$) or ($1\leq l \leq i$ and $k=i$) and $R_{k,l}=0$ otherwise. Namely $$M_i=\left(\begin{array}{ccccccc}
0&\ldots&0&M_{1,i}&0&\ldots&0\\
\vdots&&\vdots&\vdots&\vdots&&\vdots\\
0&\ldots&0&M_{i-1,i}&\vdots&&\vdots\\
M_{i,1}&\ldots&M_{i,i-1}&M_{i,i}&\vdots&&\vdots\\
0&\ldots&\ldots&\ldots&0&\ldots&0\\
\vdots&&&&\vdots&&\vdots\\
0&\ldots&\ldots&\ldots&0&\ldots&0\\
\end{array}\right).$$
Let us write now

$$M=d_{k_0} M_{0} +d_{k_1}M_{1}+\ldots+d_{k_{r}}M_{r}.$$

Observe then by construction that $E(0)$ is block upper triangular. Since $E(0)$ is invertible we get that $E_{i,i}(0)$ is invertible for each $i=0,\ldots, r$.\par
Moreover, we have 


\begin{equation}\label{detinv}\det(M_{i,i})={\det}(E_{i,i})\times\prod_{j=k_{i}}^{k_{i+1}-1}d_{j}\end{equation}
By definition of the $k_i$'s, it implies that $\det(\frac{1}{d_{k_i}}M_{i,i})$ does not vanish at $0$ . Hence we may derive a kind of domination condition for the matrix $M$ at $0^+$. Namely, let us write 

$$\begin{array}{ccl}
M&=&d_{k_0} M_0 -d_{k_1}\times\mu_{1}\times \Id_{k_0,k_1}-\ldots-d_{k_r}\times\mu_{r}\times\Id_{k_0,k_{1}}\\
&&\\
& &+d_{k_1}(M_1+\mu_{1}\Id_{k_0,k_1})
-d_{k_2}\times\mu_{2}\times\Id_{k_1,k_2}-\ldots-d_{k_r}\times\mu_{r}\times\Id_{k_1,k_2}\\
&&\\
&&\vdots\\
&&\\
&&+d_{k_r}(M_r+\mu_{r}\Id_{k_0,k_r})\\
&&\\
&=&d_{k_0} N_0 +d_{k_1}N_1+\ldots+d_{k_r}{N}_r
\end{array}$$

where the $\mu_{i}$'s are positive real numbers and $\Id_{p,q}$ is the diagonal matrix in $\R^{n\times n}$ whose $i$-th diagonal entry is $+1$ if $p\leq i\leq q-1$ and $0$ otherwise. Because of (\ref{detinv}), we know that we can choose the $\mu_i$'s such that the matrix polynomials $N_i$ are psd on a neighbourhood of $0^+$.\par
Then, using the same argument as in Theorem \ref{main_th}, we complete the construction of a semi-certificate.
\end{proof}

The result is no more true with more than one variable :

\begin{prop}\label{counter_dim2}
Let $$M(x,y)=\left(
\begin{array}{cc}
1+x^2-y^2&-x\\
-x&y^2
\end{array}\right).$$ 
Then, $M(x,y)$  is psd for $x^2\geq y^2\geq 1$ and $x^2\leq y^2\leq 1$ althought it does not admit a certificate in a neighbourhood of $(1^+, 1^+)$.
\end{prop}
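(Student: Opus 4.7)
The plan is to verify the statement in two quick steps: confirm positive semi-definiteness of $M$ on the two indicated regions, and then derive a contradiction from the supposed existence of a semi-certificate on a $(+,+)$-orthant-neighbourhood of $(1,1)$ by exhibiting a point in that neighbourhood where $M$ fails to be psd. The key geometric observation is that any such orthant-neighbourhood straddles the boundary between the psd and non-psd loci of $M$.

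For the psd claim, I would factor the determinant as
$$\det M(x,y) = y^2(1+x^2-y^2)-x^2 = (1-y^2)(y^2-x^2),$$
and note that the two factors share their sign precisely on $\{x^2\le y^2\le 1\}$ and on $\{x^2\ge y^2\ge 1\}$. On each of these regions, the diagonal entries $1+x^2-y^2$ and $y^2$ are also nonnegative, so $M$ is psd there.

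For the non-existence claim, I would pick an explicit witness. Any orthant-neighbourhood $V$ of $(1,1)$ of sign pattern $(+,+)$ contains the point $p_\delta=(1+\delta,\,1+2\delta)$ for all sufficiently small $\delta>0$. At $p_\delta$ one computes $1-y^2=-4\delta(1+\delta)<0$ and $y^2-x^2=\delta(2+3\delta)>0$, so
$$\det M(p_\delta) \;=\; -4\delta^2(1+\delta)(2+3\delta) \;<\; 0,$$
which means $M(p_\delta)$ is not positive semi-definite.

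Finally I would argue by contradiction. Suppose that $M$ admits a semi-certificate on $V$, i.e.\ a polynomial identity
$$M(x,y)=\sum_i f_i(x,y)\,U_i(x,y)^T U_i(x,y)$$
with each $f_i\ge 0$ on $V$. Evaluating both sides at $p_\delta\in V$ writes $M(p_\delta)$ as a nonnegative linear combination of hermitian squares of constant matrices, hence as a psd matrix, contradicting $\det M(p_\delta)<0$. I do not foresee any substantive obstacle: the whole content of the proposition is the simple geometric remark that an orthant-neighbourhood of a boundary point of the psd locus of $M$ cannot avoid intruding into the non-psd region $\{1<x<y\}$, after which the contradiction with the semi-certificate definition is automatic.
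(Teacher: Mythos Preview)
Your argument is internally sound but proves only a triviality, not the proposition's actual content. On the standard orthant $\{x>1,\,y>1\}$ you correctly observe that $M$ fails to be psd (at $p_\delta$ one has $1<x<y$, hence $\det M<0$), and certainly no semi-certificate can exist on a set where $M$ is not psd. But then the word ``although'' in the statement would be vacuous, and the remark immediately following the proof---that this example shows the \emph{local} counterpart of ``psd biquadratic in dimension $2$ $\Rightarrow$ sum of squares'' fails---would be unsupported.

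What the paper in fact establishes, and later invokes when discussing $M_0$ in the section on the Choi example, is that on the region $\{x>y>1\}$ near $(1,1)$---the orthant $H>0,\,Y>0$ in the coordinates $Y=y-1$, $H=x-y$ used in the proof (so that $x=1+Y+H$)---the matrix $M$ \emph{is} psd, yet admits no semi-certificate there. Your witness $p_\delta=(1+\delta,1+2\delta)$ lies \emph{outside} this region (since $y>x$ at $p_\delta$), so it yields no obstruction whatsoever to a certificate on $\{x>y>1\}$. The genuine obstruction is extracted by the paper through a coefficient-by-coefficient analysis of a hypothetical certificate on the psd region, repeatedly forcing equality in Cauchy--Schwarz until a numerical contradiction emerges. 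Your geometric shortcut cannot reach this conclusion: on the region in question $M$ really is psd, and the non-existence of a certificate is a delicate algebraic fact, not a consequence of $M$ taking an indefinite value somewhere.
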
     
\begin{proof}
First note that $\det(M(x,y))=(y^2-1)(x^2-y^2)$ to conclude to the positivity domain of $M(x,y)$. \par For convenience, let us consider the following change of variables $y=1+Y$ and $x=Y+H$. We also perform a base change to ``simplify" the expression of $M(0,0)=\left(
\begin{array}{cc}
1&-1\\
-1&1
\end{array}\right)$. Let $P=\left(
\begin{array}{cc}
1&-1\\
1&1
\end{array}\right)$ and $Q=\left(
\begin{array}{cc}
1&1\\
-1&1
\end{array}\right)$.
Then, $$P\times M(0,0)\times Q=\left(
\begin{array}{cc}
4&0\\
0&0
\end{array}\right)$$
and thus let us introduce the following matrix
$$\begin{array}{ccl}
N(X,Y)&=&P\times M(x,y)\times Q\\
&=&\left(
\begin{array}{cc}
4+4H+4Y+H^2+Y^2+2HY&2H-2Y+H^2-Y^2+2HY\\
2H-2Y+H^2-Y^2+2HY&H^2+Y^2+2HY
\end{array}\right)
\end{array}$$
The matrix $N(X,Y)$ is psd at an orthant-neighbourhood $(0^+,0^+)$ with respect to the variables $(H,Y)$. Let us assume that it has a semi-certificate of positivity. Namely :
\begin{equation}
N(X,Y)=\sum_i F_i\left(
\begin{array}{cc}
\epsilon_i^2&\epsilon_i\\
\epsilon_i&1
\end{array}\right)+
\sum_j \left(
\begin{array}{cc}
U_j^2&U_jV_j\\
U_jV_j&V_j^2
\end{array}\right)+G\left(
\begin{array}{cc}
1&0\\
0&0
\end{array}\right)
\end{equation}
where all $F_i$'s and $G$ are psd on an orthant-neighbourhood $(0^+, 0^+)$.
The degrees of the $(2,2)$-entries shows that each $f_i$ has only monomials of degree $2$ and each $V_j$ has monomials of degree $1$. Hence :
$$\left\{\begin{array}{l}
F_i=a_iH^2+b_iY^2+c_iHY\\
U_j=\gamma_j+\mu_jH+\nu_jY\\
V_j=\alpha_jH+\beta_jY\\
G={a'}H^2+{b'}Y^2+{c'}YH+{d'}H+{e'}Y+{f'}\\
\end{array}\right.$$

Identifying the $(2,2)$-entries, we get
\begin{equation}\label{entry22}
\left\{
\begin{split}
&\sum_ia_i+\sum_j\alpha_j^2=1, \\
& \sum_ib_i+\sum_j\beta_j^2=1\\
& \sum_ic_i+2\sum_j\alpha_j\beta_j=2\\
 \end{split}\right.
\end{equation}

Identifying the coefficients in $H$ and $Y$ of the $(1,2)$-entries and the constant coefficients of the $(1,1)$-entries yields :

\begin{equation}\label{before_CS}
\left\{
\begin{split}
& \sum_j\alpha_j\gamma_j=2\\
& \sum_j\beta_j\gamma_j=-2\\
& \sum_j\gamma^2_j +{f'}=4\\
 \end{split}\right.
\end{equation}
 
Let us introduce $\bar \alpha=(\alpha_j)$, $\bar \beta=(\beta_j)$ and
$\bar \gamma=(\gamma_j)$, vectors of the standart euclidean space (whose dimension is equal to the number of indexes $j$). \par
After $(\ref{before_CS})$ and $(\ref{entry22})$, we get 
$$\bar\gamma^2\leq 4,\quad \bar\alpha^2\leq 1\quad {\rm and}\quad \bar\alpha\cdot\bar\gamma=2.$$

By the Cauchy- Schwartz case of equality, we get $\bar\gamma^2=4$ and  $\bar\alpha^2=1$. Thus, ${f'}=0$ and $\sum_ia_i=0$ which means that for all $i$, $a_i=0$. 
Moreover, the vectors $\bar\alpha$ and $\bar\gamma$ must be colinear and hence $\bar\alpha=\frac{1}{2}\bar\gamma$. \par
Likewise $\beta=-\frac{1}{2}\gamma$ and $\sum_i c_i=4$. Moreover, for all $i$ we have  $b_i=0$. And hence we may assume that there is only one index $i$ and we may write 
\begin{equation}
N(X,Y)=4HY\left(
\begin{array}{cc}
\epsilon^2&\epsilon\\
\epsilon&1
\end{array}\right)+
\sum_j \left(
\begin{array}{cc}
U_j^2&U_jV_j\\
U_jV_j&V_j^2
\end{array}\right)+G\left(
\begin{array}{cc}
1&0\\
0&0
\end{array}\right).
\end{equation}

Identifying the $(1,2)$-entries, and setting $\bar\mu=(\mu_j)$ and $\bar\nu=(\nu_j)$ we have

\begin{equation}\label{entry12}
\left\{
\begin{split}
&\frac{1}{2}\bar\gamma\cdot\bar\mu =1\\
&\frac{1}{2}\bar\gamma\cdot\bar\nu =1\\
&\frac{\bar\gamma}{2}\cdot(\bar\mu-\bar\nu)+4\epsilon =2 \\
 \end{split}\right.
\end{equation}

We readily deduce that $\epsilon =\frac{1}{2}$. Identifying the $(1,1)$-entry, we have furthermore

\begin{equation}\label{entry11}
\left\{
\begin{split}
&2\,\bar\gamma\cdot\bar\mu+{d'}=4, \\
&2\,\bar\gamma\cdot\bar\nu+{e'}=4, \\
&2\,\bar\mu\cdot\bar\nu+{c'}+1=2, \\
&\bar\mu^2+{a'}=1, \\
&\bar\nu^2+{b'}=1, \\
 \end{split}\right.
\end{equation}

By $(\ref{entry12})$ and $(\ref{entry11})$, we immediately have ${d'}={e'}=0$. Moreover, we get also 
$$\bar\gamma\cdot\bar\mu=2, \quad \bar\mu^2\leq 1\quad {\rm and} \quad\bar\gamma^2=4.$$ By the Cauchy-Schwartz case of equality, we deduce that $a'=0$ and $\bar\mu=\frac{1}{2}\bar\gamma$. \par
Likewise ${b'}=0$ and $\bar\nu=\frac{1}{2}\bar\gamma$. \par
Then, $G=c'HY$ where necessarily ${c'}\geq 0$. 
To conclude, it suffices to note that $2\,\bar\mu\cdot\bar\nu+{c'}+1=2$ gives $2+{c'}+1=2$, a contradiction.
\end{proof}

Remember that any psd biquadratic form in dimension $2$ is a sum of squares, and hence admits a semi-certificate. This example shows that the {\it local} counterpart is no more true.

\air
In general we may mention the following result. Although completely elementary, it looks very much like the ones we can find in \cite{KS} or \cite{Pu} for hermitian squares or more classical sums of squares.

\begin{prop}
Let $A(x)\in\R[x]^{m\times m}$ be a symmetric matrix polynomial. Then, $A(x)$ is psd if and only if for all real $\epsilon>0$ the matrix polynomial  
$$A_\epsilon(x)=A(x)+\epsilon\left(\sum_{i=1}^nx_i^2\right)\Id_m$$ admits a semi-certificate.
\end{prop}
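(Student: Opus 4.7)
For the easy direction ($\Leftarrow$), any semi-certificate writes $A_\epsilon(x)$ pointwise as a sum of terms $f_{i,j}(x)U_{i,j}(x)^TU_{i,j}(x)$ with $f_{i,j}(x)\geq 0$, so $A_\epsilon(x)$ is psd at every $x\in\R^n$. Letting $\epsilon\to 0^+$ preserves positive semidefiniteness pointwise, so $A(x)$ is psd.

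For the substantive direction ($\Rightarrow$), fix $\epsilon>0$ and set $\sigma(x):=\sum_{i=1}^n x_i^2$. The key observation is that $A_\epsilon(x)=A(x)+\epsilon\sigma(x)\Id_m$ is a strictly positive definite matrix at every $x\in\R^n\setminus\{0\}$, since $A(x)$ is psd and $\epsilon\sigma(x)\Id_m$ is positive definite there. The plan is to reduce to Theorem~\ref{main_th} by homogenizing to $\R^{n+1}$. Let $2d\geq\deg A_\epsilon$, introduce a new variable $y$, and form the degree-$2d$ homogenization $\bar A_\epsilon(y,x):=y^{2d}A_\epsilon(x/y)$, which is a homogeneous psd matrix polynomial on $\R^{n+1}$ with $\bar A_\epsilon(1,x)=A_\epsilon(x)$. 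For any $\delta>0$, the perturbed matrix polynomial $C_\delta(y,x):=\bar A_\epsilon(y,x)+\delta(y^2+\sigma(x))^d\Id_m$ is homogeneous and positive definite on $\R^{n+1}$, so by Theorem~\ref{main_th} it admits a semi-certificate. Restricting this identity to the affine hyperplane $\{y=1\}$ yields a semi-certificate on $\R^n$ for the auxiliary matrix polynomial $A_\epsilon(x)+\delta(1+\sigma(x))^d\Id_m$.

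The main obstacle is to remove the $\delta$-perturbation. My plan is to repeat the construction with $\epsilon/2$ in place of $\epsilon$, producing for every $\delta>0$ a semi-certificate for $A_{\epsilon/2}(x)+\delta(1+\sigma(x))^d\Id_m$, and then to invoke the identity $A_\epsilon=A_{\epsilon/2}+\tfrac{\epsilon}{2}\sigma(x)\Id_m$ together with the fact that $\tfrac{\epsilon}{2}\sigma(x)\Id_m$ is itself a sum of hermitian squares. Since $(1+\sigma(x))^d$ has both a nonzero constant term and $\sigma(x)^d$ growth at infinity, while $\sigma(x)$ has neither, the leftover $\delta(1+\sigma(x))^d\Id_m$ cannot be absorbed into $\tfrac{\epsilon}{2}\sigma(x)\Id_m$ by a single scalar majorization; the argument will therefore need to split $\R^n$ into a bounded piece $\{\sigma(x)\leq R^2\}$ and a complementary piece $\{\sigma(x)\geq R^2\}$, choosing $\delta$ small relative to $R$ so that on each piece a portion of the constant-matrix component of the decomposition delivered by Theorem~\ref{main_th} can be used to counteract the perturbation. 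Making this cover refinement compatible with the semi-certificate produced by Theorem~\ref{main_th} is the technical heart of the argument.
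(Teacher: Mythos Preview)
The paper gives no proof here; it calls the result ``completely elementary'' and moves on. In the paper's standing context (homogeneous matrix polynomials, declared just before the definition of semi-certificate), the intended argument is a one-line application of Theorem~\ref{main_th}: if $A$ is a psd form then $A_\epsilon$ is a positive definite form, and Theorem~\ref{main_th} produces the semi-certificate directly. (For this to be literally correct when $\deg A>2$ the perturbation should be $\epsilon\,\sigma(x)^d$ rather than $\epsilon\,\sigma(x)$; the remark immediately after the proposition about degree bounds in a ``(homogeneous) semi-certificate'' confirms that the author is thinking of a homogeneous $A_\epsilon$, so this is almost certainly a harmless slip in the statement.)

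Your easy direction is correct. For the hard direction, you rightly observe that without homogeneity one cannot invoke Theorem~\ref{main_th} on $A_\epsilon$ itself, and your homogenize-and-perturb maneuver is a natural workaround. But the argument has a genuine gap exactly where you flag the ``technical heart.'' The obstruction to absorbing $\delta(1+\sigma)^d\Id_m$ into $\tfrac{\epsilon}{2}\sigma\,\Id_m$ is not only at infinity but also at the origin: on your bounded piece $\{\sigma\le R^2\}$ the scalar $\tfrac{\epsilon}{2}\sigma(x)-\delta(1+\sigma(x))^d$ equals $-\delta$ at $x=0$ and stays negative nearby for every $\delta>0$, so no choice of $R,\delta$ turns this subtraction into a legitimate semi-certificate term there. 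Your fallback of borrowing ``a portion of the constant-matrix component'' from the Theorem~\ref{main_th} decomposition of $C_\delta$ does not help either: that decomposition is an exact identity for $C_\delta$, not for $\bar A_\epsilon$, and it carries no slack that can be redirected to cancel the extra $\delta$-term. As written, the proof is incomplete.
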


Of course the complexity of the semi-certificate may increaes as $\epsilon$ goes to zero.\par

\remark{
The complexity of a semi-certificate shall be measured by the size of the semi-algebraic partitions and also by the number of squares. Here the degrees of the polynomials appearing in a (homogeneous) semi-certificate are bounded by the degree of the given matrix polynomial.  
}

\remark{
By \cite{HS} with respect to the (compact) unit sphere $\{\sum_{i=1}^nx_i^2=1\}$, then we obtain for $M$ a Positivestellensatz which is no more homogeneous and where the degrees are no more bounded. 
}

\remark{
Consider a semi-certificate, and a compact semi-algebraic subset $S_i$ of the given partition. If we assume that the desciption of $S_i$ is archimedean, then we may deduce a ``true" algebraic certificate without denominators for the matrix polynomial on $S_i$, using for instance Schm\"udgen or Putinar certificates.}

\section{Around the Choi counterexample}
Let us consider the counterexample of a biqudratic psd non sum of squares given in \cite{C} : 
$$\left(\begin{array}{ccc}
x^2+2z^2&-xy&-xz\\
-xy&y^2+2x^2&-yz\\
-xz&-yz&z^2+2y^2\\
\end{array}\right)$$

Altough it is not a sum of squares, it admits a semi-certificate. Indeed, when 
$|x | \geq |z |$, it can be decomposed as 

$$\begin{array}{cl}
&
\left(\begin{array}{ccc}
x^2&-xy&-xz\\
-xy&y^2&yz\\
-xz&yz&z^2
\end{array}\right)+2\left(\begin{array}{ccc}
z^2&0&0\\
0&x^2&-yz\\
0&-yz&y^2\\
\end{array}\right)
\\
&\\
&=
(-x,y,z)^T(-x,y,z)
+2(0,-z,y)^T(0,-z,y)
\\
&\\
&+2z^2\left(\begin{array}{ccc}
1&0&0\\
0&0&0\\
0&0&0\\
\end{array}\right)
+2(x^2-z^2)\left(\begin{array}{ccc}
0&0&0\\
0&1&0\\
0&0&0\\
\end{array}\right)
\\
\end{array}$$
And by symmetry, we deduce an analogeous certificate when $|z | \geq |x |$.
\air

Let us consider a variation around this example ; in the following proposition, the Choi counterexample is just $M_1$ :

\begin{prop}\label{choi-variation}
Let $$M_\lambda=\left(\begin{array}{ccc}
x^2+(\lambda+1) z^2&-xy&-xz\\
-xy&y^2+(\lambda+1) x^2&-yz\\
-xz&-yz&z^2+(\lambda+1) y^2\\
\end{array}\right)$$
where $\lambda\in\R$.\par
Then, $M_\lambda$ is never a sum of squares and it is psd if and only if $\lambda\in[0,+\infty[$. Moreover, $\det(M_0)$ is a psd non sos polynomial and $M_\lambda$ admits a semi-certificate of positivity for any $\lambda\in]0,+\infty[$.
\end{prop}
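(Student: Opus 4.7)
The first three claims are essentially computational and I will dispatch them quickly; the real work lies in part (D). For the sign condition on $\lambda$, I plan to obtain necessity by evaluating $v^T M_\lambda v$ at $(x,y,z)=v=(1,1,1)$, which collapses to $3\lambda$, forcing $\lambda\geq 0$. For sufficiency I write $M_\lambda = M_0 + \lambda\,\diag(z^2,x^2,y^2)$, reducing to $M_0\succeq 0$, and check Sylvester's principal-minor criterion directly: the diagonal entries and all three $2\times 2$ principal minors expand as sums of monomial squares, and $\det(M_0)=x^4y^2+y^4z^2+z^4x^2-3x^2y^2z^2$ is $\geq 0$ by AM-GM. This same determinant is a Motzkin-type form, yielding (C): psd by AM-GM, and non-sos by the standard Newton-polytope argument, since any sos decomposition $\sum p_k^2$ would have each $p_k$ in the span of the half-degree monomials $\{x^2y,\,y^2z,\,z^2x,\,xyz\}$, making the coefficient of $x^2y^2z^2$ in $\sum p_k^2$ a sum of squares of real numbers, hence $\geq 0$, whereas in $\det(M_0)$ it equals $-3$.

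For the ``never sos'' claim (B), my plan is to argue independently of $\lambda$. Assume $M_\lambda = V^T V$ with $V\in\R[x,y,z]^{r\times 3}$ having entries linear in $x,y,z$, and write $V = xX+yY+zZ$ for constant matrices $X,Y,Z\in\R^{r\times 3}$. Matching $V^T V$ against $M_\lambda$ coefficient by coefficient, the diagonal equations will force the columns to satisfy $Y_1 = X_3 = Z_2 = 0$ together with $|X_1|^2 = |Y_2|^2 = |Z_3|^2 = 1$, while the $xy$-, $xz$- and $yz$-coefficients of the off-diagonal entries will give $X_1\cdot Y_2 = X_1\cdot Z_3 = Y_2\cdot Z_3 = -1$ in $\R^r$. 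The Cauchy-Schwarz equality case applied to these unit vectors will force $Y_2 = -X_1$ and $Z_3 = -X_1$, whence $Y_2\cdot Z_3 = +1$, a contradiction.

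For the semi-certificate claim (D), I first handle $\lambda\geq 1$: write $M_\lambda = M_1 + (\lambda-1)\diag(z^2,x^2,y^2)$ and glue the explicit semi-certificate for the Choi form $M_1$ displayed in the paragraph preceding the proposition (on the cover $\{|x|\geq|z|\}\cup\{|z|\geq|x|\}$) to the trivial semi-certificate $(\lambda-1)(z^2 e_1e_1^T + x^2 e_2e_2^T + y^2 e_3e_3^T)$ of the diagonal remainder. For $0<\lambda<1$, I first compute $\det(M_\lambda)$ and rewrite it as $\mu^2(x^4y^2+y^4z^2+z^4x^2-3x^2y^2z^2) + (\mu-1)(\mu+2)^2 x^2y^2z^2$, where $\mu=\lambda+1>1$; this shows $M_\lambda(x_0)$ is a positive definite constant matrix whenever the three coordinates of $x_0$ are all nonzero. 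My plan is then to cover $\R^3$ by three closed axial cones $C_x=\{y^2+z^2\leq\varepsilon x^2\}$, $C_y$, $C_z$ (for a suitably small $\varepsilon>0$) plus a ``middle'' closed semi-algebraic region $S_{\mathrm{mid}}$ disjoint from the three coordinate axes. On $S_{\mathrm{mid}}$, $M_\lambda$ is positive definite in the biform sense of the paper, so Theorem~\ref{relative_th} will supply a semi-certificate. On $C_x$ I will rewrite the Choi-style decomposition using $y^2\leq\varepsilon x^2$ to absorb the troublesome residual $(\mu-2) y^2 e_3 e_3^T$ into a piece with a non-negative scalar weight, and the cyclic symmetry of $M_\lambda$ will handle $C_y$ and $C_z$.

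The main obstacle is the range $0<\lambda<1$ in (D). There Theorem~\ref{main_th} is unavailable, since $M_\lambda$ fails to be positive definite in the biform sense: the value $f_{M_\lambda}((0,0,1);(0,1,0))$ vanishes, which reflects the rank-$2$ degeneracy of $M_\lambda$ on each coordinate axis. Moreover the straightforward Choi decomposition on $\{|x|\geq|z|\}$ leaves a residual $(\mu-2) y^2 e_3 e_3^T$ whose scalar coefficient is strictly negative, and the delicate part will be finding a compatible choice of $\varepsilon$, of the axial cones, and of the local rewritings so that this negative residual can be absorbed globally while keeping the semi-algebraic cover finite.
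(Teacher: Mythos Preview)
Your treatment of (A)--(C) and of (D) for $\lambda\ge 1$ is correct and essentially parallel to the paper. For (B) the paper works in biform language (splitting each $f_i=g_i+h_i$ and evaluating at $x=y=z=S=T=U=1$), while you argue via Cauchy--Schwarz on unit vectors in $\R^r$; these are two phrasings of the same obstruction. For (C) the paper quotes the Motzkin-type form without details and in fact uses it, via the Cauchy--Binet lemma, as an \emph{alternative} proof that $M_0$ is not sos; your Newton-polytope argument is the standard direct one. For (D) with $\lambda\ge 1$, your reduction $M_\lambda=M_1+(\lambda-1)\diag(z^2,x^2,y^2)$ is cleaner than what the paper does, since the paper treats all $\lambda>0$ uniformly.

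The genuine gap is in (D) for $0<\lambda<1$, and you have correctly located it. Your plan on the axial cone $C_x=\{y^2+z^2\le\varepsilon x^2\}$ is to ``absorb the residual $(\mu-2)y^2\,e_3e_3^T$ using $y^2\le\varepsilon x^2$''. This cannot work as stated: the $(3,3)$ entry of $M_\lambda$ is $z^2+\mu y^2$, which contains no $x^2$, so the inequality $y^2\le\varepsilon x^2$ gives you nothing to trade against in that slot. No choice of $\varepsilon$ repairs this. What the paper does instead (modulo typos in its displayed identity) is to change the second rank-one piece: rather than $2\,(0,-z,y)^T(0,-z,y)$, one uses
\[
\Bigl(0,\ \tfrac{2}{\sqrt{\mu}}\,z,\ -\sqrt{\mu}\,y\Bigr)^T
\Bigl(0,\ \tfrac{2}{\sqrt{\mu}}\,z,\ -\sqrt{\mu}\,y\Bigr),
\qquad \mu=\lambda+1,
\]
which still kills the $(2,3)$ off-diagonal (product of the nonzero entries is $-2yz$) but now contributes exactly $\mu y^2$ to the $(3,3)$ slot, so the $E_{33}$ residual vanishes identically. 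The price is a larger $(2,2)$ contribution $\tfrac{4}{\mu}z^2$, leaving the residual $\bigl(\mu x^2-\tfrac{4}{\mu}z^2\bigr)E_{22}$, which is nonnegative precisely on $\{\mu^2 x^2\ge 4z^2\}$ --- a homogeneous cone containing the $x$-axis for every $\lambda>0$. With this single modification your axial-cone argument goes through, and then your use of Theorem~\ref{relative_th} on the closed middle region $S_{\rm mid}$ (where $M_\lambda$ is positive definite because $\det M_\lambda>0$ off the axes and $M_\lambda\succeq M_0\succeq 0$) finishes the proof exactly as in the paper.
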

\begin{proof}
The exact argument as in \cite{C} works to show that any $M_\lambda$ is not a sum of squares. We may reproduce it for convenience. We translate our ramewok into the language of biforms, setting
$$f_{M_\lambda}=(x^2+(\lambda+1)z^2)S^2-2xyST-2xzSU+(y^2+(\lambda+1) x^2)T^2-2yzTU+(z^2+(\lambda+1) y^2)U^2$$
Assume that $f_{M_\lambda}=\sum f_i^2$ where $f_i$ is a bilinear form. Since there is no monomials $x^2U^2$, $y^2S^2$, $z^2T^2$ in $f_{M_\lambda}$, there is no such monomials in each $f_i^2$, and hence no monomials $xU$, $yS$ nor $zT$ in each $f_i$. We may write $f_i=g_i+h_i$ were $g_i$ depends only in the monomials $xS$, $yT$, $zU$ and $h_i$ depends only in the monomials $xT$, $yU$, $zS$.\par
Then, $f_{M_\lambda}=\sum(g_i+h_i)^2$ shows that $$\sum_ig_i^2=x^2S^2+y^2T^2+z^2U^2-2(xyST+yzTU+xzSU),$$ a contradiction since the right hand side of the equality takes negative values for $x=y=z=S=T=U=1$.

\air
The fact that $M_0$ is psd and not a sum of squares may also be shown directly by using the following consequence of the Cauchy-Binet formula.\par

\begin{lem} Let $M(x)$ be a symmetric matrix polynomial. If $M$ is a sum of squares, then its determinant is also a sum of squares. \footnote{The Cauchy-Binet formula shows even more : if $M$ is a sum of squares, then {\it all} its principal minors are sums of squares.}. 
\end{lem}
\begin{proof}
We first state the Cauchy-Binet formula. \par Given matrices $A\in\R^{m\times s}$ and $B\in\R^{s\times m}$, the Cauchy-Binet formula states that 
$$\det(AB)=\sum_S\det(A_S)\det(B_S)$$ 
where $S$ ranges over all the subsets of $\{1,\ldots s\}$ with $m$ elements, and $A_S$ (respectively $B_S$) denotes the matrix in $\R^{m\times m}$ whose columns are the columns of $A$ (respectively whose rows are the rows of $B$) with index from $S$.\par
If $M(x)$ is a sum of squares, then it can be written $M(x)=A^T(x)A(x)$ for some matrix polynomial $A(x)\in\R[x]^{s\times m}$. Then, 
$$\begin{array}{ccl}
\det(M(x))&=&\sum_S\det(A(x)^T)_S\det(A(x)_S)\\
&=&\sum_S\det(M_S(x)^T)\det(M_S)\\
&=&\sum_S(\det(M_S))^2.
\end{array}$$
\end{proof}
Note that $M_0$ has as a determinant :
$$\det(M_0)=x^4y^2+y^4z^2+z^4x^2-3x^2y^2z^2$$
a variation of the celebrated Motzkin polynomial which is psd and not a sum of squares !\par
Thus, the matrix polynomial $M_0$, which is clearly psd since all its principal minors are psd, cannot be a sum of squares since its determinant is not !\air
Thus, $M_0$ (and thus all $M_\lambda$ for $\lambda\geq 0$) are psd matrix polynomial. Moreover, $$\det(M_\lambda(1,1,1))=\lambda(\lambda+3)^2$$
which shows that $M_\lambda$ is not psd for small $\lambda<0$ and hence for all $\lambda<0$.\air
Let us study now the existence of semi-certificate of positivity.\par

For $\lambda>0$, the set of real singular points of $\det(M_\lambda)$ have projective coordinates is $[1:0:0]$, $[0:1:0]$ and $[0:0:1]$ (they correspond to the points where the non negative polynomial $\det(M_\lambda)$ vanishes). 
The identity
$$M_\lambda=
\left(\begin{array}{ccc}
x^2&-xy&-xz\\
-xy&y^2&yz\\
-xz&yz&z^2
\end{array}\right)+
 \left(\begin{array}{ccc}
\lambda z^2&0&0\\
0& \frac{4z^2}{\lambda}&-2yz\\
0&-2yz&\lambda y^2\\
\end{array}\right)
+
\frac{1}{\lambda}(\lambda x^2-4z^2)\left(\begin{array}{ccc}
0&0&0\\
0& 1&0\\
0&0&0\\
\end{array}\right)
$$
gives a semi-certificate at the neighbourhood of $[1:0:0]$. We may also proceed likewise at the neighbourhood of $[0:1:0]$ and $[0:0:1]$.\par Elsewhere the matrix $M_\lambda$ is positive definite so that we can argue as in the proof of Theorem \ref{main_th} and obtain a finite semi-algebraic covering, and hence a semi-certificate. \air

Whereas, the matrice $M_0$ has a lot more singular points, namely : $[1:0:0]$, $[0:1:0]$, $[0:0:1]$, $[1:1:1]$, $[1:1:-1]$, $[1:-1:1]$, $[1:-1:1]$.\par

So, the problem which remains is to find a certificate of positivity for $M_0$ at the points $[1:1:1]$, $[1:1:-1]$, $[1:-1:1]$, $[1:-1:1]$. It does not appear trivial. For instance, the domination condition is not satisfied here. Another natural idea would be to consider :
$$\begin{array}{ccl}
M_0&=&

\left(\begin{array}{ccc}
x^2+z^2&-xy&-xz\\
-xy&y^2+x^2&-yz\\
-xz&-yz&z^2+y^2\\
\end{array}\right)\\
&&\\
&=&
\left(\begin{array}{ccc}
0&0&0\\
0&y^2&-yz\\
0&-yz&y^2\\
\end{array}\right)+\left(\begin{array}{ccc}
y^2&-xy&0\\
-xy&x^2&0\\
0&0&0
\end{array}\right) \\
&&\\
&+&
\left(\begin{array}{ccc}
x^2+z^2-y^2&0&-xz\\
0&0&0\\
-xz&0&y^2
\end{array}\right)
\end{array}$$

But, setting $z=1$ in the last matrix, we get a matrix polynomial which is psd at an orthant-neighbourhood of $(1^+,1^+)$ with respect to the variables $(x,y)$, but which does not admit any semi-certificate as shown by Proposition \ref{counter_dim2}. \par Nevertheless, let us see in the following how it is possible to get a semi-certificate in some orthant-neighbourhood of $[1:1:1]$, and hence  a partial result. 
\air
Let $z=1$ and $x=1+X$, $y=1+Y$. Write the Taylor formula at $(1,1)$ :
$$M_0=A_0+XA_1+X^2A_2+YB_1+Y^2B_2+XYC_2$$

Consider the following matrices :
$$A_X=\frac{5}{12}A_0+XA_1+X^2A_2=
\left(\begin{array}{ccc}
\frac{5}{6}+2X+X^2&-\frac{5}{12}-X&-\frac{5}{12}-X\\
&&\\
-\frac{5}{12}-X&\frac{5}{6}+2X+X^2&-\frac{5}{12}\\
&&\\
-\frac{5}{12}-X&-\frac{5}{12}&\frac{5}{6}
\end{array}\right)$$

$$A_Y =\frac{5}{12}A_0+YB_1+Y^2B_2=
\left(\begin{array}{ccc}
\frac{5}{6}&-\frac{5}{12}-Y&-\frac{5}{12}\\
&&\\
-\frac{5}{12}&\frac{5}{6}+2Y+Y^2&-\frac{5}{12}-Y\\
&&\\
-\frac{5}{12}&-\frac{5}{12}-Y&\frac{5}{6}+2Y+Y^2
\end{array}\right)$$

$$A_{XY} =\frac{2}{12}A_0+XYC_2=
\left(\begin{array}{ccc}
\frac{1}{3}&-\frac{1}{6}-XY&-\frac{1}{6}\\
&&\\
-\frac{1}{6}-XY&\frac{1}{3}&-\frac{1}{6}\\
&&\\
-\frac{1}{6}&-\frac{1}{6}&\frac{1}{3}
\end{array}\right)$$

One may check that 
$$C_X=A_X-A_0\left(\frac{X}{4}-X^2\right)=\left(\begin{array}{ccc}
\frac{18 X^{2}+9 X+5}{6} & \frac{-12 X^{2}-9 X-5}{12} & \frac{-12 X^{2}-9 X-5}{12} \\
&&\\
\frac{-12 X^{2}-9 X-5}{12} & \frac{18 X^{2}+9 X+5}{6} & \frac{-12 X^{2}+3 X-5}{12} \\
&&\\
\frac{-12 X^{2}-9 X-5}{12} & \frac{-12 X^{2}+3 X-5}{12} & \frac{12 X^{2}-3 X+5}{6}
\end{array}\right)$$
is positive semi-definite on all $\R$. Since it is a psd matrix polynomial with respect to the single variable $X$, it is a sum of squares.

\par
Likewise, one may check that 
$$C_Y=A_Y-A_0\left(\frac{-Y}{4}-Y^2\right)=
\left(\begin{array}{ccc}
\frac{12 Y^{2}+3 Y+5}{6} & \frac{-12 Y^{2}-15 Y-5}{12} & \frac{-12 Y^{2}-3 Y-5}{12} \\
&&\\
\frac{-12 Y^{2}-15 Y-5}{12} & \frac{18 Y^{2}+15 Y+5}{6} & \frac{-12 Y^{2}-15 Y-5}{12} \\
&&\\
\frac{-12 Y^{2}-3 Y-5}{12} & \frac{-12 Y^{2}-15 Y-5}{12} & \frac{18 Y^{2}+15 Y+5}{6}
\end{array}\right)$$
is positive semi-definite on all $\R$. Since it is a psd matrix polynomial with respect to the single variable $Y$, it is a sum of squares.\par
Finally, write
$$A_{XY}=\left(\frac{2(1+6XY)}{12}A_0\right)-XY(A_0-C_2)$$
Where we check that $(A_0-C_2)$ is a psd constant matrix.\par
If we sum up all these informations, we obtain a desired certificate of positivity
$$M_0=A_X+A_Y+A_{XY}=A_0\left(\frac{X}{4}-X^2\right)+C_X+A_0\left(\frac{-Y}{4}-Y^2\right)+C_Y+A_{XY}$$
with respect to the open orthant-neighbourhood of $(0,0)$ defined by $X>0$ and $Y<0$. 
\air
Likewise, we may produce a local certificate with respect to the orthant defined by $X<0$ and $Y>0$). \par But, it seems less clear how to obtain a local certificate with respect to the orthants defined by $XY>0$.

\end{proof}

\section{Concluding remarks}
\subsection{Open questions about semi-certificates}
We have introduced the notion of piecewise semi-certificate of positivity for matrix polynomials. For the moment, the only general result is that all positive definite matrix polynomial admit such a certificate. A lot of things remain to be studied.\air
\begin{itemize}
\item
We shall better understand the set of all psd matrix polynomials which admit a semi-certifiacte. Beginning with biquadratic forms for instance ?
\item We shall developp some effective algorithm to produce the certificates. 
\par
\item About the complexity of certificates : how can we  bound the number of squares, and also the number of pieces of the semi-algebraic partition ? 
\par
\end{itemize}
On the other hand, one may also rise some questions about the familly of psd polynomials which have a quadratic determinantal representation. This is the object of the last subsection :

\subsection{Semi-definite quadratic determinatal representations}
In the spirit of what happens in Proposition \ref{choi-variation}, one may be interested in determining what psd polynomial can be written as the determinant of a psd quadratic matrix polynomial (whose entries are homogeneous polynomials of degree $2$).
Namely, 
\begin{question}\label{deter-quadrat}
Let $f(x)\in\R[x]$ be a form of degree $2d$ which is suppose to be non negative. Does-there exists a quadratic matrix polynomial $M(x)\in\R[x]^{d\times d}$ which is psd for all $x\in\R^n$ and such that
\begin{equation}\label{determ-quadratic}f(x)=\det(M(x)) ?\end{equation}
\end{question}

One motivation for such a question is that it appears as the quadratic analogeaous of very classical linear determinantal representations which have been studied for a very long time. The specifically real considerations being more recents and esentially du to Vinnikov (see for instance \cite{V} and all related papers). Let us recall that these  have a lot of applications for instance to Linear Matrix Inequalities, convex modelling, etc...\par
On the other hand, biquadratic forms are a quite common object found at various areas in ingenering applications. For instance, any determinantal representation as in (\ref{determ-quadratic}) of a psd non sum of squares polynomial provides, by Cauchy-Binet formula, another example of a psd biquadratic form which is not a sum of squares.\air
 
Of course, a simple count on the number of parameters shows Question \ref{deter-quadrat} has a negative answer in general.\par
We can be even more precise : if $f$ is a form of degree $4$ in $4$ variables, such a representation never exists when $f$ is psd non sum of squares. Indeed, if $f=\det(M(x))$ with $M(x)\in\R[x]^{2\times 2}$, then $M(x)$ is a psd biquadratic form which is a sum of squares (in dimension $2$), hence $\det(M)$ is a sum of squares of polynomials by Cauchy-Binet formula.\par
We may also look at the trivial case when $n=2$, which we may deshomogenize for simplicity. The answer to Question \ref{deter-quadrat} is clearly positive by an elementary argument. Indeed, let us write $$f(x)=\sum_{i=0}^{2d}a_ix^i=\prod_j(x-\alpha_j)^2\times\prod_{k}(x+\beta_k)^2+\gamma_k^2)$$
Then, the result follows obviously from the multiplicative property of the determinant and the fact that the $(x-\alpha_i)^2$'s and the $(x+\beta_k)^2+\gamma_k^2)$'s are psd quadratic polynomials.\par
 
We may even give a more algorithmic  construction (a polynomial time algorithm with respect to the size of the coefficients of the polynomial) using arrows matrices and following \cite{Fi} and \cite{Qz}. 
\air
Back to Question \ref{deter-quadrat}, and before expecting general results, one may wonder if for instance the celebrated Motzkin and Robinson polynomials \par
$$\left\{\begin{array}{l}
{\rm Mo}=z^6+x^2y^4+x^4y^2-3\,x^2y^2z^2\\
\\
{\rm Ro}=x^6+y^6+z^6-x^4y^2-x^4z^2-y^4x^4-y^4z^2-z^4x^2-z^4y^2+3\,x^2y^2z^2\\
\end{array}\right.$$
can be written as in (\ref{determ-quadratic}) ?
 \air

It is quite easy, using for instance the linear well-know determinantal representations
of cubics curves, to produce quadratic determinantal representations for these two polynomials, but unfortunately none such representation yields a {\it psd} quadratic matrix polynomial.

\end{document}